\newcommand{\ab}[1]{\boldsymbol{#1}}
\newcommand{\N}{\mathbb N}
\newcommand{\R}{\mathbb R}
\newtheorem{thm}{Theorem}
\newtheorem{lem}[thm]{Lemma}
\newtheorem{prop}[thm]{Proposition}
\newtheorem{cor}[thm]{Corollary}
\theoremstyle{definition}
\newtheorem{ex}[thm]{Example}
\newtheorem{defn}[thm]{Definition}
\newtheorem{rem}[thm]{Remark}
\definecolor{dred}{rgb}{0.92,0,0}
\definecolor{dgreen}{rgb}{0,0.6,0}
\begin{document}

\begin{frontmatter}

\title{Dimension and basis construction for analysis-suitable \\ 
$G^{1}$ two-patch parameterizations}


\author[pavia1]{Mario Kapl\corref{cor}}
\ead{mario.kapl@unipv.it}
 
\author[pavia1,pavia2]{Giancarlo Sangalli}
\ead{giancarlo.sangalli@unipv.it}

\author[linz]{Thomas Takacs}
\ead{thomas.takacs@jku.at}

\address[pavia1]{Dipartimento di Matematica ``F.Casorati", Universit\`{a} degli Studi di Pavia, Italy}

\address[pavia2]{Istituto di Matematica Applicata e Tecnologie Informatiche ''E.Magenes`` (CNR), Italy}

\address[linz]{Institute of Applied Geometry, Johannes Kepler University Linz, Austria}

\cortext[cor]{Corresponding author}

\begin{abstract}
We study the dimension and construct a basis for $C^1$-smooth isogeometric function spaces over two-patch domains. In this context, an isogeometric function is a 
function defined on a B-spline domain, whose graph surface also has a B-spline representation. We consider constructions along one interface between two patches. We 
restrict ourselves to a special case of planar B-spline patches of bidegree~$(p,p)$ with $p \geq 3$, so-called analysis-suitable $G^1$~geometries, which are derived 
from a specific geometric continuity condition. This class of two-patch geometries is exactly the one which allows, under certain additional assumptions, $C^1$ 
isogeometric spaces with optimal approximation properties (cf. \cite{CoSaTa16}).

Such spaces are of interest when solving numerically fourth-order PDE problems, such as the biharmonic equation, using the isogeometric method. In particular, we 
analyze the dimension of the $C^1$-smooth isogeometric space and present an explicit representation for a basis of this space. Both the dimension of the space and 
the basis functions along the common interface depend on the considered two-patch parameterization. Such an explicit, geometry dependent basis construction is 
important for an efficient implementation of the isogeometric method. The stability of the constructed basis is numerically confirmed for an example configuration.
\end{abstract}

\begin{keyword}
isogeometric analysis \sep
analysis-suitable $G^1$ geometries \sep
$C^1$ smooth isogeometric functions \sep
geometric continuity
\end{keyword}

\end{frontmatter}

\section{Introduction} \label{sec:introduction}

The problems discussed in this paper are inspired by isogeometric analysis (IGA), which was developed in \cite{HuCoBa04}. 
The core idea of isogeometric analysis is to use the spline based representation of CAD models directly for the numerical analysis of partial differential equations 
(PDE). For a more detailed description of the isogeometric framework we refer to \cite{ANU:9260759,CottrellBook}.

One of the advantages of IGA is the possibility to have discretization spaces of high order smoothness. These spaces can then be used to directly solve high order PDE 
problems. There exist several fourth (and higher) order problems of practical relevance. For their application in IGA see \cite{BaDe15,TaDe14}, 
as well as \cite{ABBLRS-stream,da2012isogeometric,benson2011large,kiendl-bazilevs-hsu-wuechner-bletzinger-10,kiendl-bletzinger-linhard-09} for Kirchhoff-Love shells, 
\cite{gomez2008isogeometric} for the Cahn-Hilliard equation and \cite{gomez2010isogeometric} for the Navier-Stokes-Korteweg equation. 

The spline based representation of the physical domains allows for high order smoothness within one B-spline patch. However, most geometries of practical relevance 
cannot be represented directly with one patch but have to be parametrized using a multi-patch approach. It is not trivial to construct smooth function spaces over 
multi-patch domains. Many results for multi-patch domains can be derived from considerations on a single interface, hence in the following we will restrict ourselves to 
two-patch domains.

We are interested in $C^1$ isogeometric function spaces over two-patch domains. More precisely, we compute the dimension and construct a stable basis for a special 
class of, so called, analysis-suitable $G^1$ parameterizations. This class of geometries was first introduced in \cite{CoSaTa16}, where it was also shown that exactly 
the geometries of this class allow under certain assumptions $C^{1}$ isogeometric spaces with optimal 
approximation properties (cf. \cite{CoSaTa16}). Note that an
isogeometric function is $C^1$ if its graph surface is
$G^1$.
Hence, the analysis-suitability condition is a restriction of the more general geometric continuity condition. We refer to \cite{Peters2,Pe90} for the definition of 
geometric continuity. 

The existing literature about the construction of $C^{1}$-smooth isogeometric functions on two-patch (and multi-patch) domains can be roughly classified 
into two possible approaches. The first one employs $G^{1}$-surface constructions around extraordinary vertices to obtain a set of $C^{1}$-smooth functions, see e.g. 
\cite{Pe15-2,Peters2,NgKaPe15,NgPe16}. In contrast, the second approach studies the entire space of $C^{1}$-smooth isogeometric functions on any given two-patch 
(or multi-patch) parameterization and generate a basis of the corresponding $C^{1}$ isogeometric space. Examples are 
\cite{BeMa14,CoSaTa16,KaBuBeJu16,KaViJu15,Matskewich-PhD,mourrain2015geometrically}.

In this paper we follow the second approach, especially explored in \cite{BeMa14,KaBuBeJu16,KaViJu15,Matskewich-PhD}. We extend these results in two main directions. 
First, these existing constructions and investigations are limited to piecewise bilinear domains. Our approach encloses the much wider class of analysis-suitable 
$G^{1}$ parameterizations, which contains the class of piecewise bilinear domains. Second, our construction works for non-uniform splines of arbitrary bidegree $(p,p)$ 
with $p \geq 3$ and of arbitrary regularity $r$ with $1 \leq r < p-1$ within the two single patches. In contrast, the constructions \cite{BeMa14,Matskewich-PhD} are 
restricted to biquartic (for special cases) and to biquintic B\'ezier elements and the constructions \cite{KaBuBeJu16,KaViJu15} are restricted to bicubic and 
biquartic uniform splines of regularity $r=1$. 

Further differences to \cite{BeMa14,Matskewich-PhD} are that our approach allows the construction of nested $C^{1}$ isogeometric spaces and that our basis functions are 
explicitly given, whereas the basis functions in \cite{BeMa14,Matskewich-PhD} are implicitly defined by means of minimal determining sets for the B\'ezier coefficients.  
Similar to \cite{KaBuBeJu16,KaViJu15}, the explicitly given basis functions possess a very small local support and are well conditioned. Moreover, the spline coefficients 
of our basis functions can be simply obtained by means of blossoming or fitting. This could provide a simple implementation in existing IGA libraries. In addition, 
we present the study of the dimension of the resulting $C^{1}$ isogeometric spaces for all possible configurations of analysis-suitable $G^{1}$ two-patch 
parameterizations.

The remainder of the paper is organized as follows. In Section~\ref{sec:prel} we describe some basic definitions and notations which are used throughout the paper. 
Section~\ref{sec:C1-smooth} recalls the concept of $C^{1}$-smooth isogeometric spaces over analysis-suitable $G^{1}$ two-patch 
parameterizations. The dimension of these spaces, which depends on the considered two-patch parameterization, is analyzed in 
Section~\ref{sec:dimension}. Then we present in Section~\ref{sec:basis} an explicit construction of basis functions, and describe in Section~\ref{sec:matrices}  
their resulting spline coefficients by means of blossoming (and fitting). Finally, we conclude the paper in Section~\ref{sec:conclusion}.

\section{Preliminaries} \label{sec:prel}

Let $\omega$ be the interval $[0,1]$ or the unit square $[0,1]^2$. We denote by $\mathcal{S}(\mathcal{T}^{p,r}_{k},\omega)$ the (tensor-product) spline space of 
degree~$p$ (in each direction), which is defined on $\omega$ by choosing the open knot vector $\mathcal{T}^{p,r}_{k}=(t^{p,r}_{0},\ldots,t^{p,r}_{2p+1+k(p-r)})$ 
(in each direction) given by
\begin{equation*}  
\mathcal{T}^{p,r}_{k} =
(\underbrace{0,\ldots,0}_{(p+1)-\mbox{\scriptsize times}},
\underbrace{\textstyle \tau_1,\ldots ,\tau_1}_{(p-r) - \mbox{\scriptsize times}}, 
\underbrace{\textstyle \tau_2,\ldots ,\tau_2}_{(p-r) - \mbox{\scriptsize times}},\ldots, 
\underbrace{\textstyle \tau_k,\ldots ,\tau_k}_{(p-r) - \mbox{\scriptsize times}},
\underbrace{1,\ldots,1}_{(p+1)-\mbox{\scriptsize times}}),
\end{equation*}
where $k \in \N_0$, $0<\tau_i<\tau_{i+1}<1$ for all $1\leq i\leq
k-1$ and  $k$ is the number of different inner knots (in each
direction).  Thereby $r$ describes the 
resulting $C^{r}$-continuity of the space
$\mathcal{S}(\mathcal{T}^{p,r}_{k},\omega)$ at all inner knots. The
range for the regularity parameter $r$ is in general  $0
\leq r \leq p-1$. However, the focus here is on $r\geq 1$. Of course, in case of tensor-product splines, 
the knot vectors could be different in each direction. Moreover, the knot multiplicities could be different for every knot. To keep the presentation simple, we consider 
only the presented case. The spline spaces $\mathcal{S}(\mathcal{T}^{p,r}_{k},[0,1])$ 
and $\mathcal{S}(\mathcal{T}^{p,r}_{k},[0,1]^2)$ are spanned by the (tensor-product) B-splines 
$N^{p,r}_{i}$ and $N_{i,j}^{p,r}=N_{i}^{p,r} N_{j}^{p,r}$, $i,j=0, \ldots , p+k(p-r)$, respectively. Each function 
$h \in \mathcal{S}(\mathcal{T}^{p,r}_{k},[0,1])$ and $z \in \mathcal{S}(\mathcal{T}^{p,r}_{k},[0,1]^{2})$ possesses a B-spline representation
\begin{equation} \label{eq:spline_representation}
 h(t) = \sum_{i=0}^{p + k (p-r)} d_{i} N_{i}^{p,r} (t)
\end{equation}
and 
\[
 z(u,v) = \sum_{i=0}^{p + k(p-r)} \sum_{j=0}^{p + k (p-r)} d_{i,j} N_{i,j}^{p,r} (u,v)
\]
with spline control points $d_{i} \in \R$ and $d_{i,j} \in \R$, respectively.

In addition, we consider the knot vectors $\mathcal{T}^{p,r}_{k,\ell}=(t^{p,r}_{0},\ldots,t^{p,r}_{2p+2+k(p-r)})$ and 
$\mathcal{T}^{p,r}_{k,\ell,\ell'}=(t^{p,r}_{0},\ldots,t^{p,r}_{2p+3+k(p-r)})$, which are 
obtained by inserting into the knot vector $\mathcal{T}^{p,r}_{k}$ the knot $\tau_\ell$ with the index $\ell \in \{ 1,\ldots, k \}$ (in case of 
$\mathcal{T}^{p,r}_{k,\ell}$) and the knots $\tau_\ell$ and $\tau_{\ell'}$ with the indices $\ell,\ell' \in \{1, \ldots, k\} $ and $\ell \neq \ell'$ (in case 
of $\mathcal{T}^{p,r}_{k,\ell,\ell'})$. The resulting spline spaces are denoted by $\mathcal{S}(\mathcal{T}^{p,r}_{k,\ell},\omega)$ and 
$\mathcal{S}(\mathcal{T}^{p,r}_{k,\ell,\ell'},\omega)$, respectively, and are again $C^{r}$-smooth at all inner knots except at the knot $\tau_\ell$ or at 
the knots $\tau_\ell$, $\tau_{\ell'}$, respectively, where the spaces are only $C^{r-1}$-smooth. Moreover, we denote by $\mathcal{P}^{p}(\omega)$ the space of 
(tensor-product) polynomials of degree $p$ on $\omega$.

\section{$C^{1}$-smooth isogeometric spaces  and AS $G^{1}$ two-patch geometries} \label{sec:C1-smooth}

We consider a planar domain $\Omega \subset \R^{2}$ composed of two 
quadrilateral spline patches $\Omega^{(L)}$ and $\Omega^{(R)}$, i.e. 
$\Omega= \Omega^{(L)} \cup \Omega^{(R)}$, which share a whole edge as common interface $\Gamma = \Omega^{(L)} \cap \Omega^{(R)}$. We assume that each 
patch $\Omega^{(S)}$, $S \in \{L,R\}$, is the image $\ab{F}^{(S)}([0,1]^{2})$ of a regular, bijective  geometry mapping
\[
\ab{F}^{(S)}: [0,1]^2 \rightarrow \Omega^{(S)}, \quad \ab{F}^{(S)} \in \mathcal{S}(\mathcal{T}^{p,r}_{k},[0,1]^2),
\]
with the spline representations \[
\ab{F}^{(S)}(u,v)= \sum_{i=0}^{p+k(p-r)} \sum_{j=0}^{p+k(p-r)} \ab{c}^{(S)}_{i,j} N_{i,j}^{p,r} (u,v), \quad \ab{c}^{(S)}_{i,j} \in \R^{2}.
\] 
For the sake of simplicity, we assume that the two patches $\ab{F}^{(L)}$ and $\ab{F}^{(R)}$ share 
the common interface at 
\[
\ab{F}^{(L)}(0,v) = \ab{F}^{(R)}(0,v), \quad v \in [0,1].
\]
We denote the parameterization of the common curve at $\Gamma$ by $\ab{F}_{0} : [0,1] \rightarrow \R^{2}$ and assume that $\ab{F}_{0}(v)=\ab{F}^{(L)}(0,v) = 
\ab{F}^{(R)}(0,v)$. The space of isogeometric functions on $\Omega$ is given as 
\[
\mathcal{V} = \{\phi: \Omega \rightarrow \R \mbox{ such that } \phi \circ \ab{F}^{(S)} \in \mathcal{S}(\mathcal{T}^{p,r}_{k},[0,1]^2), \mbox{ }S \in \{L,R \} \}.
\]
The graph surface $\ab{\Sigma} \subset \Omega \times \R$ of an isogeometric 
function $\phi \in \mathcal{V}$ consists of the two graph surface patches 
\[
\ab{\Sigma}^{(S)} : [0,1]^{2} \rightarrow \Omega^{(S)} \times \R , \quad S \in \{L,R \},
\]
possessing the form
\[
\ab{\Sigma}^{(S)}(u,v) = (\ab{F}^{(S)}(u,v),g^{(S)}(u,v))^T,
\]
where $g^{(S)} = \phi \circ \ab{F}^{(S)} \in \mathcal{S}(\mathcal{T}^{p,r}_{k},[0,1]^2)$.
Since the geometry mappings $\ab{F}^{(S)}$, $S \in \{L,R \}$, are given, an isogeometric function $\phi \in \mathcal{V}$ is determined by the two associated spline 
functions $g^{(S)}$, $S \in \{L,R \}$, with the spline representations \[
g^{(S)}(u,v)= \sum_{i=0}^{p+k(p-r)} \sum_{j=0}^{p+k(p-r)} d^{(S)}_{i,j} N_{i,j}^{p,r} (u,v), \quad d^{(S)}_{i,j} \in \R.
\]

We are interested in $C^{1}$-smooth isogeometric functions $\phi \in
\mathcal{V}$. We assume  $C^{0}$-smoothness condition of $\phi$, that results in 
\begin{equation} \label{eq:G0condition_function}
g^{(L)}(0,v) = g^{(R)}(0,v),
\end{equation}
for $v \in [0,1]$.  We denote the function along the common
interface by  $g(v)=g^{(L)}(0,v) = g^{(R)}(0,v)$. Let us consider the space 
$\mathcal{V}^{1}$ of $C^{1}$-smooth isogeometric functions on $\Omega$, i.e. 
\[
\mathcal{V}^{1} = \mathcal{V} \cap C^{1}(\Omega),
\] 
in more detail. An isogeometric function $\phi \in \mathcal{V}$ belongs to $\mathcal{V}^{1}$ if and only if the two graph surface patches $\ab{\Sigma}^{(L)}$ and 
$\ab{\Sigma}^{(R)}$ possess a well defined tangent plane along the common interface $\ab{\Sigma}^{(L)} \cap \ab{\Sigma}^{(R)}$,  compare 
\cite{CoSaTa16, KaBuBeJu16, KaViJu15}. This is equivalent to the condition that there exist functions $\alpha^{(L)}, \alpha^{(R)}, \beta : [0,1] \rightarrow \R$ 
such that for all $v \in [0,1]$ 
\begin{equation} \label{eq:condition_alpha}
\alpha^{(L)}(v) \alpha^{(R)}(v) < 0
\end{equation}
and 
\begin{equation} \label{eq:G1condition}
\alpha^{(R)}(v) D_{u}\ab{\Sigma}^{(L)}(0,v) - \alpha^{(L)}(v)D_{u}\ab{\Sigma}^{(R)}(0,v) + \beta(v) D_{v}\ab{\Sigma}^{(R)}(0,v) = \ab{0},
\end{equation}
see \cite{Pe02}. The above described equivalent conditions are called geometric continuity of order~$1$ or $G^{1}$-smoothness (cf. \cite{HoLa93, Pe02}).

Note that the first two equations~\eqref{eq:G1condition}, i.e.
\begin{equation} \label{eq:G1condition_map}
\alpha^{(R)}(v) D_{u}\ab{F}^{(L)}(0,v) - \alpha^{(L)}(v)D_{u}\ab{F}^{(R)}(0,v) + \beta(v) D_{v}\ab{F}_{0}(v) = \ab{0},
\end{equation} 
uniquely determine the functions $\alpha^{(L)}$, $\alpha^{(R)}$ and $\beta$ up to a common function $\gamma: [0,1] \rightarrow \R$ (with $\gamma(v) \neq 0$) by 
\begin{equation} \label{eq:common_factor}
\alpha^{(L)}=\gamma(v) \bar{\alpha}^{(L)}, \alpha^{(R)}=\gamma(v) \bar{\alpha}^{(R)} \mbox{ and } \beta(v) = \gamma(v)\bar{\beta}(v),
\end{equation}
where
\begin{equation} \label{eq:alphasbar}
\bar{\alpha}^{(S)}(v) = \det ( D_{u}\ab{F}^{(S)}(0,v), D_{v}\ab{F}_{0}(v)  ), \mbox{ }S \in \{L,R \}, 
\end{equation}
and 
\begin{equation} \label{eq:betabar}
\bar{\beta}(v) = \det ( D_{u}\ab{F}^{(L)}(0,v), D_{u}\ab{F}^{(R)}(0,v) ),
\end{equation}
see e.g. \cite{CoSaTa16, Pe02}.

Therefore, an isogeometric function $\phi \in \mathcal{V}$ belongs to $\mathcal{V}^{1}$ if and only if the equation 
\begin{equation} \label{eq:G1condition_function} 
\alpha^{(R)}(v) D_{u} g^{(L)}(0,v) - \alpha^{(L)}(v)D_{u} g^{(R)}(0,v) + \beta(v) D_{v} g(v) = 0
\end{equation}
is satisfied for all $v \in [0,1]$. Equations~\eqref{eq:G0condition_function} and \eqref{eq:G1condition_function} lead to linear constraints on the spline control 
points $d_{i,j}^{(S)}$ of $g^{(S)}$, $S \in \{L, R \}$, with indices $(S,i,j)$ belonging to the index space 
\[
 I_{\Gamma} = \{ (S,i,j) \mbox{ }| \mbox{ } S \in \{L,R \}, \mbox{ } i =0,1 \mbox{ and }j = 0, \ldots, p+k(p-r) \} .
\]
Moreover, we denote by $I$ the index space formed by all spline control points $d_{i,j}^{(S)}$, $S \in \{L,R \}$, i.e.
\[
I = \{ (S,i,j) \mbox{ }| \mbox{ }S \in \{L, R \} \mbox{ and } i,j = 0, \ldots, p+k(p-r) \}.
\] 

Furthermore, for functions $\alpha^{(L)}, \alpha^{(R)}$ and $\beta$ satisfying equations~\eqref{eq:G1condition_map} there exist non-unique functions $\beta^{(L)}, 
\beta^{(R)} : [0,1] \rightarrow \R$ such that
\begin{equation} \label{eq:beta} 
\beta(v) = \alpha^{(L)}(v) \beta^{(R)}(v) - \alpha^{(R)}(v) \beta^{(L)}(v),
\end{equation}
see e.g. \cite{CoSaTa16, Pe02}.

Note that for generic patches $\ab{F}^{(L)}, \ab{F}^{(R)} \in \mathcal{S}(\mathcal{T}^{p,r}_{k},[0,1]^2)$ the functions  $\alpha^{(L)}, \alpha^{(R)}$ and $\beta$ fulfill 
$\alpha^{(L)},\alpha^{(R)} \in \mathcal{S}(\mathcal{T}^{2p-1,r-1}_{k},[0,1])$ as well as $\beta \in \mathcal{S}(\mathcal{T}^{2p,r}_{k},[0,1])$. For special 
configurations the degree may be lower and the regularity may be higher.

Motivated by \cite{CoSaTa16, Pe02}, we restrict in the following the considered geometry mappings $\ab{F}^{(L)}$ and $\ab{F}^{(R)}$ to 
geometry mappings possessing linear functions $\alpha^{(L)}$, $\alpha^{(R)}$, $\beta^{(L)}$, $\beta^{(R)}$, 
as stated in the following definition.

\begin{defn}[Analysis-suitable $G^{1}$, cf. \cite{CoSaTa16}]\label{def:asG1}
  The two-patch geometry parameterization $\ab{F}^{(L)}$ and
  $\ab{F}^{(R)}$ is  analysis-suitable $G^{1}$ (AS $G^{1}$) if there
  exist  $\alpha^{(L)}$, $\alpha^{(R)}$, $\beta^{(L)}$, $\beta^{(R)} 
\in \mathcal{P}^{1}([0,1])$, with  $\alpha^{(L)}$ and $\alpha^{(R)}$ relatively
  prime (i.e., $\deg (\gcd (\alpha^{(L)},\alpha^{(R)}) ) =0$)   satisfying equations~\eqref{eq:G1condition_map} and \eqref{eq:beta}.
\end{defn}
The condition on $\deg (\gcd (\alpha^{(L)},\alpha^{(R)}) ) =0$  was not used in \cite{CoSaTa16} but is not restrictive: if $\deg (\gcd
(\alpha^{L},\alpha^{R}) ) =1$  one can replace $\alpha^{(L)}$, $\alpha^{(R)}$ and $\beta$ by $\alpha^{(L)}/\gcd (\alpha^{(L)},\alpha^{(R)})$, $\alpha^{(R)}/\gcd 
(\alpha^{(L)},\alpha^{(R)})$ and $\beta/\gcd (\alpha^{(L)},\alpha^{(R)})$,
respectively. Obviously, the polynomial $\beta$ is divisible by $\gcd
(\alpha^{(L)},\alpha^{(R)})$, which can be seen best in
\eqref{eq:beta}. With $\deg (\gcd (\alpha^{(L)},\alpha^{(R)}) ) =0$, the 
functions $\alpha^{(L)}$, $\alpha^{(R)}$ and $\beta$ are uniquely determined up to a common 
constant.

It was shown in \cite{CoSaTa16} when the functions $\alpha^{(S)}$ and $\beta^{(S)}$, $S \in \{L,R \}$, are assumed to be of higher degree or even 
to be spline functions along the interface, then the polynomial representation along the interface is reduced to lower degrees. Hence, these spaces do not guarantee 
optimal approximation order. For this more general case the investigation of a dimension formula or of a basis construction should be possible in similar way as 
presented in the following sections, but is beyond the scope of this paper.

\section{Dimension of the space $\mathcal{V}^{1}$ for AS $G^{1}$ two-patch geometries} \label{sec:dimension}

We consider AS $G^{1}$ geometries $\ab{F}^{(L)}, \ab{F}^{(R)}
\in \mathcal{S}(\mathcal{T}^{p,r}_{k},[0,1]^2)$ with the corresponding functions  $\alpha^{(L)}$, $\alpha^{(R)} \in \mathcal{P}^{1}([0,1])$ and 
$\beta \in \mathcal{P}^{2}([0,1])$. 

Let $d_{\alpha}$ be the maximum degree of the functions $\alpha^{(S)}$, i.e. 
\begin{equation*}
d_{\alpha}=\max (\deg ( \alpha^{(L)}),\deg ( \alpha^{(R)} )).
\end{equation*}
Here we consider the actual degree of the functions. Since $\alpha^{(L)}, \alpha^{(R)} \in \mathcal{P}^{1}([0,1])$, we obtain either $d_{\alpha} = 0$ or $d_{\alpha} = 1$. 
Let $z_{\beta}$ be defined as the number of different inner knots where the function $\beta$ possesses the value zero, i.e.
\begin{equation*} 
 z_{\beta}=|\{ v_{0} \in \{\tau_1, \ldots , \tau_k \} \mbox{ } | \mbox{ } \beta(v_{0}) = 0 \}|.
\end{equation*}
Since $\beta \in \mathcal{P}^{2}([0,1])$, $ z_{\beta} \in \{ 0,1,2,k
\}$. 

Depending on $\beta$, we define a  new  knot vector
$\widetilde{\mathcal{T}^{p}_{k}}$. First, if 
${\beta}=0$ then   $\widetilde{\mathcal{T}^{p}_{k}}=
\mathcal{T}^{p,r}_{k}$.  Otherwise, assuming  ${\beta}\neq 0$, we have
three cases: if  $z_\beta =0$, we set $\widetilde{\mathcal{T}^{p}_{k}}=
\mathcal{T}^{p,r+1}_{k}$, if   $z_\beta =1$ or  $z_\beta =2$,   we set  $\widetilde{\mathcal{T}^{p}_{k}} =
\mathcal{T}^{p,r+1}_{k,\ell}$  or  $\widetilde{\mathcal{T}^{p}_{k}}= 
\mathcal{T}^{p,r+1}_{k,\ell,\ell'}$, respectively, 
where $\ell, \in \{1,\ldots,k \}$, and possibly  $ \ell' \in
\{1,\ldots,k \}$ with $\ell \neq \ell'$,  are the indices of
$\tau_{\ell}$ and $\tau_{\ell'}$, which are roots of $\beta$.

We are interested in the dimension of the isogeometric space $\mathcal{V}^{1}$. Clearly, the space $\mathcal{V}^{1}$ is the direct sum of the two subspaces 
\[
\mathcal{V}^{1}_{1}= \{\phi \in \mathcal{V}^{1} \mbox{ }| \mbox{ }d_{i,j}^{(S)}=0 \mbox{ for }S \in \{L,R \} \mbox{ and } (S,i,j) \in I_{\Gamma}  \}
\]
and 
\[
\mathcal{V}^{1}_{2}= \{\phi \in \mathcal{V}^{1} \mbox{ }| \mbox{ }d_{i,j}^{(S)}=0 \mbox{ for }S \in \{L,R \} \mbox{ and } (S,i,j) \in I \setminus I_{\Gamma}  \},
\]
which implies that 
\[
\dim \mathcal{V}^{1} = \dim \mathcal{V}^{1}_{1}  +  \dim \mathcal{V}^{1}_{2}.
\]
In \cite{KaViJu15}, the functions of $\mathcal{V}^{1}_{1}$ and $\mathcal{V}^{1}_{2}$ have been called basis functions of the first and second kind, respectively. 
We first state the dimension of $\mathcal{V}^{1}_{1}$.  
\begin{lem}
The dimension of $\mathcal{V}^{1}_{1}$ is equal to 
\[ \dim V^{1}_{1} =  2 (p+k(p-r)-1)(p+k(p-r)+1).
\]
\end{lem}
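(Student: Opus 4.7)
The plan is to argue that the $C^0$ and $C^1$ smoothness conditions across the interface $\Gamma$ are automatically satisfied for any function in $\mathcal{V}^1_1$, so that $\mathcal{V}^1_1$ is nothing but the space of pairs $(g^{(L)},g^{(R)})$ whose interface-adjacent coefficients (those with index in $I_\Gamma$) vanish. The dimension then becomes a straightforward count of the remaining free coefficients, multiplied by two.

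In detail, I would first recall that the only coefficients $d^{(S)}_{i,j}$ appearing in the values $g^{(S)}(0,v)$ and in the derivatives $D_u g^{(S)}(0,v)$ are those with $i=0$ and $i=1$, because the B-splines $N_i^{p,r}$ corresponding to an open knot vector have the property that only $N_0^{p,r}$ is nonzero at $0$ and only $N_0^{p,r}, N_1^{p,r}$ have a nonzero first derivative at $0$. Since $\mathcal{V}^1_1$ by definition sets every coefficient with $(S,i,j)\in I_\Gamma$ to zero, for any $\phi\in\mathcal{V}^1_1$ we have $g^{(L)}(0,v)=g^{(R)}(0,v)=0$, $D_u g^{(L)}(0,v)=D_u g^{(R)}(0,v)=0$, and consequently $D_v g(v)=0$. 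Hence \eqref{eq:G0condition_function} and \eqref{eq:G1condition_function} are trivially satisfied, regardless of the AS $G^1$ data $\alpha^{(L)},\alpha^{(R)},\beta$.

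Conversely, the remaining coefficients (those with $(S,i,j)\in I\setminus I_\Gamma$, i.e.\ $i\in\{2,\dots,p+k(p-r)\}$, $j\in\{0,\dots,p+k(p-r)\}$, and $S\in\{L,R\}$) can be chosen freely, and the tensor-product B-splines $N^{p,r}_{i,j}$ are linearly independent. So $\mathcal{V}^1_1$ is isomorphic to the space of admissible coefficient vectors, and its dimension equals the cardinality of $I\setminus I_\Gamma$:
\[
|I\setminus I_\Gamma| = 2\bigl((p+k(p-r)+1) - 2\bigr)\bigl(p+k(p-r)+1\bigr) = 2(p+k(p-r)-1)(p+k(p-r)+1).
\]

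There is no real obstacle here; the only subtlety is ensuring that the $G^1$ relation \eqref{eq:G1condition_function} places no hidden constraint on coefficients with $i\geq 2$, but since that equation evaluates $g^{(S)}$ and its $u$-derivative only at $u=0$, it depends exclusively on coefficients in $I_\Gamma$, and thus imposes no condition on the interior unknowns. This completes the argument.
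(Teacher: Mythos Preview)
Your proposal is correct and follows essentially the same approach as the paper, which simply observes that $\dim \mathcal{V}^{1}_{1} = |I \setminus I_{\Gamma}|$ and states this as ``clearly'' true. You have just made explicit the reason behind that word: the smoothness constraints \eqref{eq:G0condition_function} and \eqref{eq:G1condition_function} involve only the coefficients in $I_{\Gamma}$, so once those are set to zero the constraints are trivially satisfied and the remaining coefficients are free.
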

\begin{proof}
Clearly, $\dim \mathcal{V}^{1}_{1}$ is equal to the number of control points $d^{(S)}_{i,j}$, $S \in \{L,R \}$, possessing indices $(S,i,j) \in I \setminus I_{\Gamma}$, 
i.e. $\dim \mathcal{V}^{1}_1 = |I \setminus I_{\Gamma}|$.
\end{proof}

To analyze the dimension of $\mathcal{V}^{1}_{2}$, we need additional tools describing the situation at $\Gamma$. Some of these tools have been (similarly) 
introduced in \cite{CoSaTa16}. Consider the transversal 
vector $\ab{d}^{(S)} $ defined on  $\Gamma$  such that 
\[
\ab{d}^{(S)} \circ \ab{F}_{0}(v)  = (D_{u}\ab{F}^{(S)}(0,v),D_{v}\ab{F}_{0}(v)) (1,-\beta^{(S)}(v))^{T} \frac{1}{\alpha^{(S)}(v)} , \mbox{ }S \in \{ L,R \}.
\]
Observe that $\ab{d}^{(L)} = \ab{d}^{(R)} $ (which is equivalent to
\eqref{eq:G1condition_map}) and therefore we simply set 
\begin{displaymath}
  \ab{d} =  \ab{d}^{(L)} = \ab{d}^{(R)} .
\end{displaymath}

In addition, we consider the space of traces and transversal derivatives on $\Gamma$ and its pullback, which are given by
\[
\mathcal{V}^{1}_{\Gamma} = \{\Gamma \ni (x,y) \mapsto (\phi(x,y) , \nabla \phi(x,y) \cdot \ab{d}(x,y)) , \mbox{ such that }\phi \in \mathcal{V}^{1}_{2} \}
\]
and
\[
\widehat{\mathcal{V}^{1}_{\Gamma}} = \{(\phi,\nabla \phi \cdot \ab{d}) \circ \ab{F}_{0}, \mbox{ such that } \phi \in \mathcal{V}^{1}_{2}\},
\]
respectively. The transversal vector $\ab{d}$, $\mathcal{V}^{1}_{\Gamma}$ and $\widehat{\mathcal{V}^{1}_{\Gamma}}$ depend only on the choice of $\beta^{(L)}$ and 
$\beta^{(R)}$, since $\alpha^{(L)}$ and $\alpha^{(R)}$ are now
uniquely determined by the geometry mappings $\ab{F}^{(L)}$ and
$\ab{F}^{(R)}$. Associated to $\ab{d}$, we consider the transversal derivative of $\phi$ with respect to $\ab{d}$ on $\Gamma$, i.e. $(\nabla \phi \cdot \ab{d}) \circ 
\ab{F}_{0} $.

Clearly, for $\phi \in \mathcal{V}_{2}^{1}$ the function $\phi \circ \ab{F}_{0}$ is a spline function. More precisely, we have:
\begin{lem} \label{lem:space_g0}
If $\phi \in \mathcal{V}^{1}_{2}$, then $\phi \circ \ab{F}_{0} \in \mathcal{S}(\widetilde{\mathcal{T}^{p}_{k}},[0,1])$. 
\end{lem}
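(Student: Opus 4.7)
The plan is to exploit the $G^1$ function condition \eqref{eq:G1condition_function} to bootstrap the regularity of $g = \phi \circ \ab{F}_{0}$ at each inner knot where $\beta$ does not vanish.

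First, I would observe that by the definition of $\mathcal{V}^{1}_{2}$ and the compatibility condition \eqref{eq:G0condition_function}, one has
\[
g(v) = g^{(L)}(0,v) = g^{(R)}(0,v),
\]
which, being the restriction to $u=0$ of a tensor-product spline in $\mathcal{S}(\mathcal{T}^{p,r}_{k},[0,1]^2)$, belongs a priori to $\mathcal{S}(\mathcal{T}^{p,r}_{k},[0,1])$. Analogously, $D_{u} g^{(S)}(0,v)$ belongs to $\mathcal{S}(\mathcal{T}^{p,r}_{k},[0,1])$ as a spline in $v$, since differentiation in $u$ does not affect the $v$-regularity. Thus $g$, $D_{u}g^{(L)}(0,\cdot)$ and $D_{u}g^{(R)}(0,\cdot)$ are all $C^{r}$ at every inner knot $\tau_{i}$.

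Second, if $\beta \equiv 0$, then $\widetilde{\mathcal{T}^{p}_{k}} = \mathcal{T}^{p,r}_{k}$ and the statement follows from the previous paragraph. Otherwise, I would rewrite \eqref{eq:G1condition_function} as
\[
\beta(v)\, D_{v} g(v) \;=\; \alpha^{(L)}(v)\, D_{u} g^{(R)}(0,v) - \alpha^{(R)}(v)\, D_{u} g^{(L)}(0,v).
\]
Since $\alpha^{(L)}, \alpha^{(R)}, \beta$ are polynomials, hence $C^{\infty}$, the right-hand side is $C^{r}$ at every inner knot $\tau_{i}$. Consequently, the function $\beta \cdot D_{v} g$ is likewise $C^{r}$ at each $\tau_{i}$.

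Third, I would extract the extra smoothness of $g$ by differentiating $r$ times and applying the Leibniz rule at an inner knot $\tau_{i}$. Writing
\[
D^{r}\bigl[\beta\, D_{v} g\bigr] \;=\; \beta \cdot D^{r+1} g \;+\; \sum_{j=1}^{r} \binom{r}{j}\, D^{j}\beta \cdot D^{r+1-j} g,
\]
every term in the sum involves at most $D^{r} g$, which is continuous at $\tau_{i}$ because $g \in \mathcal{S}(\mathcal{T}^{p,r}_{k},[0,1])$. Since the whole left-hand side is continuous by the previous step, it follows that $\beta(\tau_{i})\bigl[D^{r+1}g(\tau_{i}^{+}) - D^{r+1}g(\tau_{i}^{-})\bigr] = 0$. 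Hence at every inner knot with $\beta(\tau_{i}) \neq 0$, the function $g$ is $C^{r+1}$. Together with the case distinction $z_{\beta} \in \{0,1,2\}$ recalled in the definition of $\widetilde{\mathcal{T}^{p}_{k}}$, this yields the required knot-vector characterization $g \in \mathcal{S}(\widetilde{\mathcal{T}^{p}_{k}},[0,1])$.

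I expect the main (minor) obstacle to be the bookkeeping in the Leibniz expansion and the verification that $D_{u}g^{(S)}(0,\cdot)$ inherits $C^{r}$-smoothness in $v$ at inner knots; both are routine but need to be spelled out to justify that the only discontinuous contribution to $D^{r}[\beta D_{v}g]$ comes from the $\beta \cdot D^{r+1}g$ term.
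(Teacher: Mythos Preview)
Your proposal is correct and follows essentially the same approach as the paper: both arguments pivot on equation~\eqref{eq:G1condition_function} and on the fact that $D_{u}g^{(S)}(0,\cdot)\in\mathcal{S}(\mathcal{T}^{p,r}_{k},[0,1])$ to force the extra order of regularity of $g$ at knots where $\beta$ does not vanish. The paper compresses this into a single sentence, while you spell out the Leibniz-rule bookkeeping that isolates the jump $\beta(\tau_i)\bigl[D^{r+1}g(\tau_i^{+})-D^{r+1}g(\tau_i^{-})\bigr]$; this is exactly the mechanism the paper leaves implicit.
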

\begin{proof}
Analyzing equation~\eqref{eq:G1condition_function}, we observe that $\phi \circ \ab{F}_{0} \in \mathcal{S}(\widetilde{\mathcal{T}^{p}_{k}},[0,1])$, since 
$D_{u} g^{(S)}(0,v) \in \mathcal{S}(\mathcal{T}^{p,r}_{k},[0,1])$.
\end{proof}

The following lemma ensures that for $\phi \in 
\mathcal{V}_{2}^{1}$ the function $(\nabla \phi \cdot \ab{d}) \circ \ab{F}_{0}$ is a spline function, too.  

\begin{lem}\label{lem:space_g1}
 If $\phi \in \mathcal{V}^{1}_{2}$, then $(\nabla \phi \cdot \ab{d}) \circ \ab{F}_{0} \in \mathcal{S}(\mathcal{T}^{p-d_{\alpha},r-1}_{k},[0,1])$.
\end{lem}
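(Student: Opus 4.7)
The plan is to express $h(v) := [(\nabla \phi \cdot \ab{d}) \circ \ab{F}_0](v)$ using both the $S=L$ and $S=R$ representations of $\ab{d}$, and then to combine these two expressions with the coprimality of $\alpha^{(L)},\alpha^{(R)}$ guaranteed by the AS $G^1$ definition in order to deduce the degree drop from $p$ to $p - d_\alpha$. By the chain rule applied to $g^{(S)} = \phi \circ \ab{F}^{(S)}$, together with the defining formula for $\ab{d}$, one obtains for each $S \in \{L, R\}$ the identity
\begin{equation*}
\alpha^{(S)}(v)\, h(v) = D_u g^{(S)}(0, v) - \beta^{(S)}(v)\, D_v g(v) =: f^{(S)}(v).
\end{equation*}
Since $D_u g^{(S)}(0, \cdot) \in \mathcal{S}(\mathcal{T}^{p,r}_k, [0,1])$, $D_v g \in \mathcal{S}(\mathcal{T}^{p-1, r-1}_k, [0,1])$, and $\beta^{(S)} \in \mathcal{P}^{1}([0,1])$, it follows that $f^{(S)} \in \mathcal{S}(\mathcal{T}^{p, r-1}_k, [0,1])$.

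The key step is the piecewise degree reduction. The sign condition \eqref{eq:condition_alpha} forces both $\alpha^{(L)}$ and $\alpha^{(R)}$ to be non-vanishing on $[0,1]$, so $h$ is a well-defined smooth function there, and equating the two representations above yields the polynomial identity $\alpha^{(R)}\, f^{(L)} = \alpha^{(L)}\, f^{(R)}$. Now fix any open interval $I_j$ between two consecutive knots of the partition underlying $\mathcal{T}^{p, r-1}_k$ and regard $f^{(L)}|_{I_j}$ and $f^{(R)}|_{I_j}$ as polynomials of degree at most $p$. Since $\gcd(\alpha^{(L)}, \alpha^{(R)}) = 1$ by Definition~\ref{def:asG1}, Euclid's lemma for polynomials applied to $\alpha^{(L)} \mid \alpha^{(R)}\, f^{(L)}|_{I_j}$ gives $\alpha^{(L)} \mid f^{(L)}|_{I_j}$, so that $h|_{I_j} = f^{(L)}|_{I_j}/\alpha^{(L)}$ is a polynomial of degree at most $p - \deg \alpha^{(L)}$; the same argument with $L$ and $R$ swapped yields $\deg h|_{I_j} \leq p - \deg \alpha^{(R)}$, and hence $\deg h|_{I_j} \leq p - d_\alpha$.

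Finally, the regularity of $h$ at each interior knot coincides with that of $f^{(S)}$, because $\alpha^{(S)}$ is a non-vanishing polynomial and therefore $C^\infty$; consequently $h$ is $C^{r-1}$ at every interior knot of the partition. Combined with the piecewise polynomial degree bound, this yields $h \in \mathcal{S}(\mathcal{T}^{p - d_\alpha, r-1}_k, [0,1])$ as claimed. The main obstacle is the piecewise degree reduction: knowing only that $\alpha^{(S)} h \in \mathcal{S}(\mathcal{T}^{p, r-1}_k, [0,1])$ for a single $S$ would give $h$ of degree as high as $p$, and it is precisely the coprimality built into the AS $G^1$ condition that forces the additional degree drop when $d_\alpha = 1$.
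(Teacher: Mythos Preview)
Your proof is correct and follows essentially the same approach as the paper: both derive the identity $\alpha^{(R)} f^{(L)} = \alpha^{(L)} f^{(R)}$ from the two representations of $h$ (the paper's equation~\eqref{eq:G1condition_function_version2}), and both invoke the coprimality of $\alpha^{(L)}$ and $\alpha^{(R)}$ from Definition~\ref{def:asG1} to conclude the piecewise divisibility and hence the degree drop. You are somewhat more explicit than the paper in spelling out the piecewise application of Euclid's lemma and in justifying the $C^{r-1}$ regularity at the knots via the non-vanishing of $\alpha^{(S)}$, but the underlying argument is the same.
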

\begin{proof}
Recall that $\ab{d} \circ \ab{F}_{0}(v) = \ab{d}^{(L)} \circ \ab{F}_{0}(v) =  \ab{d}^{(R)} \circ \ab{F}_{0}(v) $. An isogeometric function $\phi$ belong to the space 
$\mathcal{V}^{1}_{2}$ if and only if
\[
 (\nabla^{(L)} \phi \cdot \ab{d}) \circ \ab{F}_{0}(v) = (\nabla^{(R)} \phi \cdot \ab{d}) \circ \ab{F}_{0}(v) = (\nabla \phi \cdot \ab{d}) \circ \ab{F}_{0}(v) 
\]
for all $v \in [0,1]$. Since 
\begin{equation} \label{eq:rel_g1}
 (\nabla^{(S)} \phi \cdot \ab{d} ) \circ \ab{F}_{0}(v) = \frac{D_{u}g^{(S)}(0,v)-\beta^{(S)}(v)D_{v}g(v)}{\alpha^{(S)}(v)}, \mbox{ }S \in \{L,R \},
\end{equation}
we obtain that $\phi \in \mathcal{V}_{2}^{1}$ if and only if
\begin{equation} \label{eq:G1condition_function_version2}
 (D_{u}g^{(L)}(0,v)-\beta^{(L)}(v)D_{v}g(v))\alpha^{(R)}(v) =  (D_{u}g^{(R)}(0,v)-\beta^{(R)}(v)D_{v}g(v))\alpha^{(L)}(v) 
\end{equation}
for all $v \in [0,1]$. (Condition~\eqref{eq:G1condition_function_version2} is exactly the same as condition~\eqref{eq:G1condition_function} by substituting $\beta$ 
via~\eqref{eq:beta}.) Recall that $\deg (\gcd (\alpha^{L},\alpha^{R}) ) =0$. Therefore, by dividing Equation~\eqref{eq:G1condition_function_version2} by $\alpha^{(L)}$, 
we see that $(\nabla^{(L)} \phi \cdot \ab{d} ) \circ \ab{F}_{0} \in \mathcal{S}(\mathcal{T}^{p-\deg(\alpha^{(L)}),r-1}_{k},[0,1])$. Analogously we can show that 
$(\nabla^{(R)} \phi \cdot \ab{d} ) \circ \ab{F}_{0} \in \mathcal{S}(\mathcal{T}^{p-\deg (\alpha^{(R)}),r-1}_{k},[0,1])$ and obtain that 
$(\nabla \phi \cdot \ab{d} ) \circ \ab{F}_{0} \in \mathcal{S}(\mathcal{T}^{p-d_{\alpha},r-1}_{k},[0,1])$.
\end{proof}

There is a one-to-one correspondence between
 trace and transversal derivative at $\Gamma$, and $\phi \in
\mathcal{V}^{1}_{2}$. 
\begin{prop} \label{prop:representation} 
For any $(g_{0},g_{1}) \in \widehat{\mathcal{V}^{1}_{\Gamma}}$ there
exists a unique $\phi \in \mathcal{V}^{1}_{2}$ such that
$(g_{0},g_{1})  = (\phi,\nabla \phi \cdot \ab{d}) \circ \ab{F}_{0}$,
given, for $S \in \{L,R \}$, by
\begin{equation}
  \label{eq:phi-from-g0-and-g1}
  \phi \circ \ab{F}^{(S)} = g_{0}(v)(N^{p,r}_{0}(u) + N^{p,r}_{1}(u)) + \left( \alpha^{(S)}(v) g_{1}(v) + \beta^{(S)}(v) 
  g_{0}^\prime(v) \right)\frac{1}{(k+1)p}N^{p,r}_{1}(u) .
\end{equation}
\end{prop}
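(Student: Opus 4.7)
The plan is to prove the proposition by reducing it to a two-coefficient-per-patch linear problem at the interface: uniqueness first, then derivation of the closed form.

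First I would exploit the definition of $\mathcal{V}^{1}_{2}$: for any $\phi\in \mathcal{V}^{1}_{2}$ we have $d^{(S)}_{i,j}=0$ for $i\geq 2$, so each pull-back reduces to
\[
g^{(S)}(u,v) \;=\; A^{(S)}(v)\, N^{p,r}_{0}(u) + B^{(S)}(v)\, N^{p,r}_{1}(u),
\]
with $A^{(S)},B^{(S)}\in\mathcal{S}(\mathcal{T}^{p,r}_{k},[0,1])$ uniquely determined by $\phi$. Using the endpoint values $N^{p,r}_{0}(0)=1$, $N^{p,r}_{1}(0)=0$ of an open knot vector, the $C^{0}$ condition forces $A^{(L)}=A^{(R)}=g_{0}$. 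Next, since $N^{p,r}_{i}$ and $D_u N^{p,r}_{i}$ vanish at $u=0$ for $i\geq 2$ and since $\sum_i D_u N^{p,r}_{i}(0)=0$, one obtains $D_u g^{(S)}(0,v)=c\bigl(B^{(S)}(v)-g_{0}(v)\bigr)$ with $c=D_u N^{p,r}_{1}(0)$; for the uniform knot vector used in the statement this gives $c=(k+1)p$. Substituting into \eqref{eq:rel_g1} yields
\[
g_{1}(v)\;=\;\frac{c\bigl(B^{(S)}(v)-g_{0}(v)\bigr)-\beta^{(S)}(v)\,g_{0}^{\prime}(v)}{\alpha^{(S)}(v)} .
\]
Because $\alpha^{(S)}$ has no zero on $[0,1]$ (a consequence of \eqref{eq:condition_alpha}), this equation solves uniquely for $B^{(S)}$, giving
\[
B^{(S)}(v)\;=\;g_{0}(v)+\frac{1}{(k+1)p}\bigl(\alpha^{(S)}(v)\,g_{1}(v)+\beta^{(S)}(v)\,g_{0}^{\prime}(v)\bigr).
\]

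From here the two claims of the proposition fall out. Uniqueness follows immediately: if $(g_{0},g_{1})=(0,0)$, then $A^{(S)}=0$ and $B^{(S)}=0$, so $\phi=0$; hence the map $\phi\mapsto(\phi,\nabla\phi\cdot\ab{d})\circ\ab{F}_{0}$ from $\mathcal{V}^{1}_{2}$ to $\widehat{\mathcal{V}^{1}_{\Gamma}}$ is injective. Existence for a given $(g_{0},g_{1})\in\widehat{\mathcal{V}^{1}_{\Gamma}}$ is automatic by definition of $\widehat{\mathcal{V}^{1}_{\Gamma}}$, and the formulas just derived for $A^{(S)}$ and $B^{(S)}$ are exactly \eqref{eq:phi-from-g0-and-g1}.

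To close the loop I would check consistency: that the right-hand side of \eqref{eq:phi-from-g0-and-g1} is truly an element of $\mathcal{V}^{1}_{2}$. This requires verifying that the coefficients of $N^{p,r}_{0}(u)$ and $N^{p,r}_{1}(u)$ belong to $\mathcal{S}(\mathcal{T}^{p,r}_{k},[0,1])$. For $g_{0}$ this uses Lemma~\ref{lem:space_g0} together with the inclusion $\mathcal{S}(\widetilde{\mathcal{T}^{p}_{k}},[0,1])\subseteq\mathcal{S}(\mathcal{T}^{p,r}_{k},[0,1])$, and for the $N^{p,r}_{1}(u)$-coefficient it uses Lemma~\ref{lem:space_g1} together with $\alpha^{(S)},\beta^{(S)}\in\mathcal{P}^{1}([0,1])$ and the derivative rule applied to $g_{0}$. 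The $C^{1}$ condition \eqref{eq:G1condition_function_version2} is verified by direct substitution: both sides reduce to $\alpha^{(L)}\alpha^{(R)}g_{1}$ after using \eqref{eq:beta} and the expression for $B^{(S)}$.

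The core of the argument is the straightforward linear-algebra reduction in Step~1; the main obstacle is the bookkeeping in the consistency check of Step~3, especially ensuring the regularity of $\alpha^{(S)}g_{1}+\beta^{(S)}g_{0}^{\prime}$ at inner knots where $\beta$ vanishes, since this is precisely why $\widetilde{\mathcal{T}^{p}_{k}}$ was defined with its case split over $z_{\beta}$. Provided this bookkeeping is carried out case by case, no further difficulty arises.
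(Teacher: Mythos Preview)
Your argument is correct and follows the same route as the paper: both determine $g^{(S)}$ from its value and first $u$-derivative at $u=0$ via the relation $D_u g^{(S)}(0,v)=\alpha^{(S)}g_1+\beta^{(S)}g_0'$ (the paper packages this as a one-line Taylor expansion in $u$ matched to $N_0^{p,r},N_1^{p,r}$, while you write out the B-spline expansion and solve for the two coefficient functions---equivalent presentations of the same linear reduction). Your Step~3 consistency check is unnecessary for this proposition, since existence of $\phi\in\mathcal{V}^1_2$ with the prescribed trace and transversal derivative is immediate from the definition of $\widehat{\mathcal{V}^1_\Gamma}$; the regularity bookkeeping you anticipate belongs instead to Corollary~\ref{cor:characterization-of-C1-functions}.
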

\begin{proof}
Recall the notation   $\phi \circ \ab{F}^{(S)} = g^{(S)}(u,v) $. Equation~\eqref{eq:rel_g1} is equivalent to 
\begin{equation} \label{eq:rel_gu}
D_{u} g^{(S)}(0,v) =\alpha^{(S)} (v) (\nabla^{(S)} \phi \cdot \ab{d} ) \circ \ab{F}_{0}(v) + \beta^{(S)}(v) g_{0}^\prime(v)=\alpha^{(S)}(v) g_{1}(v) + 
\beta^{(S)}(v) g_{0}^\prime(v).
\end{equation}
By Taylor expansion of $g^{(S)}(u,v)$ and using equation~\eqref{eq:rel_gu}, we obtain
\begin{eqnarray*}
g^{(S)}(u,v) & = & g^{(S)}(0,v) + (D_{u} g^{(S)})(0,v) u + \mathcal{O}(u^{2}) \\
 & = & g_{0}(v) + (\alpha^{(S)}(v) g_{1}(v) + \beta^{(S)}(v)g_{0}^\prime(v) )u + \mathcal{O}(u^{2}) \\
 & = &  g_{0}(v)(N^{p,r}_{0}(u) + N^{p,r}_{1}(u)) + (\alpha^{(S)}(v) g_{1}(v) + \beta^{(S)}(v) g_{0}^\prime(v) )\frac{1}{(k+1)p}N^{p,r}_{1}(u),
\end{eqnarray*}
for $S \in \{L, R\}$. 
\end{proof}
Two useful corollaries of Proposition \ref{prop:representation}
follow below. Both hold for any possible choice of $\beta^{(L)}$ and $\beta^{(R)}$.
\begin{cor}\label{cor:space-dimensions-at-Gamma}
The dimension of $\mathcal{V}^{1}_{2}$ is  equal to the dimension of $\widehat{\mathcal{V}^{1}_{\Gamma}}$.
\end{cor}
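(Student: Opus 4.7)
The plan is to exhibit a linear bijection between $\mathcal{V}^{1}_{2}$ and $\widehat{\mathcal{V}^{1}_{\Gamma}}$, from which equality of dimensions follows immediately. The natural candidate is the trace-plus-transversal-derivative map
\[
\Psi: \mathcal{V}^{1}_{2} \to \widehat{\mathcal{V}^{1}_{\Gamma}}, \qquad \Psi(\phi) = (\phi, \nabla \phi \cdot \ab{d}) \circ \ab{F}_{0}.
\]
First I would observe that $\Psi$ is linear, since both the pullback by $\ab{F}_{0}$ and the operation $\phi \mapsto (\phi, \nabla \phi \cdot \ab{d})$ are linear in $\phi$. Surjectivity of $\Psi$ is automatic from the very definition of $\widehat{\mathcal{V}^{1}_{\Gamma}}$ as the image of $\mathcal{V}^{1}_{2}$ under exactly this map.

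The only substantive content is injectivity, and this is exactly what Proposition~\ref{prop:representation} provides: given $(g_{0},g_{1}) \in \widehat{\mathcal{V}^{1}_{\Gamma}}$, formula~\eqref{eq:phi-from-g0-and-g1} produces the \emph{unique} preimage $\phi \in \mathcal{V}^{1}_{2}$. Hence $\ker \Psi = \{0\}$, so $\Psi$ is a linear isomorphism and $\dim \mathcal{V}^{1}_{2} = \dim \widehat{\mathcal{V}^{1}_{\Gamma}}$.

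There is essentially no difficult step here: the corollary is a repackaging of Proposition~\ref{prop:representation} in dimensional terms, once one notices that the uniqueness clause of the proposition is equivalent to injectivity of $\Psi$ and that surjectivity is built into how $\widehat{\mathcal{V}^{1}_{\Gamma}}$ was defined. The only thing worth emphasising in the write-up is that the argument does not depend on the particular choice of $\beta^{(L)}$ and $\beta^{(R)}$ (even though the map $\Psi$ itself does through $\ab{d}$), which is why the statement is valid for any such choice.
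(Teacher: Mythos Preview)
Your argument is correct and is precisely the intended one: the paper states this result as an immediate corollary of Proposition~\ref{prop:representation} without further elaboration, and your write-up simply makes explicit the linear bijection $\Psi$ (surjective by definition of $\widehat{\mathcal{V}^{1}_{\Gamma}}$, injective by the uniqueness clause in Proposition~\ref{prop:representation}) that underlies it.
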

\begin{cor}\label{cor:characterization-of-C1-functions}
 It holds that $(g_{0},g_{1}) \in \widehat{\mathcal{V}^{1}_{\Gamma}} $ if
 and only if:
 \begin{eqnarray}
   g_{0}&\in&
              \mathcal{S}(\widetilde{\mathcal{T}^{p}_{k}},[0,1])\label{eq:C1-char-g0}\\
 g_{1}&\in& \mathcal{S}({\mathcal{T}^{p-d_{\alpha},r-1}_{k}},[0,1]) \label{eq:C1-char-g1}\\
\alpha^{(S)} g_{1} + \beta^{(S)}g_{0}^\prime &\in&    \mathcal{S}({\mathcal{T}^{p,r}_{k}},[0,1]),
                                                \text{ for $S \in \{L,R \}$.}\label{eq:C1-char-mixed}
 \end{eqnarray}
\end{cor}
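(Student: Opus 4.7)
The plan is to read Corollary \ref{cor:characterization-of-C1-functions} as a direct consequence of the bijection established in Proposition \ref{prop:representation}, combined with the regularity information already collected in Lemmas \ref{lem:space_g0} and \ref{lem:space_g1}. The ``only if'' direction reduces essentially to bookkeeping, while for the ``if'' direction I would reconstruct $\phi$ from $(g_{0},g_{1})$ via formula \eqref{eq:phi-from-g0-and-g1} and then verify that the resulting function really lies in $\mathcal{V}^{1}_{2}$.

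For the necessity part, I would start from $(g_{0},g_{1})=(\phi,\nabla\phi\cdot\ab{d})\circ\ab{F}_{0}$ with $\phi\in\mathcal{V}^{1}_{2}$. Condition \eqref{eq:C1-char-g0} is then precisely Lemma \ref{lem:space_g0}, and \eqref{eq:C1-char-g1} is Lemma \ref{lem:space_g1}. For \eqref{eq:C1-char-mixed} I would invoke identity \eqref{eq:rel_gu}, which gives $\alpha^{(S)}g_{1}+\beta^{(S)}g_{0}^{\prime}=D_{u}g^{(S)}(0,\cdot)$; since $g^{(S)}\in\mathcal{S}(\mathcal{T}^{p,r}_{k},[0,1]^{2})$, its $u$-derivative evaluated at $u=0$ automatically belongs to $\mathcal{S}(\mathcal{T}^{p,r}_{k},[0,1])$.

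For the sufficiency part, I would define $g^{(S)}$ on each patch by the right-hand side of \eqref{eq:phi-from-g0-and-g1}. The coefficient of $N^{p,r}_{0}(u)$ is $g_{0}$, which lies in $\mathcal{S}(\mathcal{T}^{p,r}_{k},[0,1])$ thanks to \eqref{eq:C1-char-g0} and the inclusion $\mathcal{S}(\widetilde{\mathcal{T}^{p}_{k}},[0,1])\subseteq\mathcal{S}(\mathcal{T}^{p,r}_{k},[0,1])$; the coefficient of $N^{p,r}_{1}(u)$ is $g_{0}+((k+1)p)^{-1}(\alpha^{(S)}g_{1}+\beta^{(S)}g_{0}^{\prime})$, which lies in the same space by \eqref{eq:C1-char-mixed}. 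Thus $g^{(S)}\in\mathcal{S}(\mathcal{T}^{p,r}_{k},[0,1]^{2})$ with all spline coefficients of $u$-index $\geq 2$ vanishing, so the associated $\phi$ already meets the sparsity requirement of $\mathcal{V}^{1}_{2}$. By construction $g^{(L)}(0,v)=g^{(R)}(0,v)=g_{0}(v)$ and $D_{u}g^{(S)}(0,v)=\alpha^{(S)}(v)g_{1}(v)+\beta^{(S)}(v)g_{0}^{\prime}(v)$, so both sides of \eqref{eq:G1condition_function_version2} collapse to $\alpha^{(L)}\alpha^{(R)}g_{1}$ and the $C^{1}$-condition is met; relation \eqref{eq:rel_g1} then recovers $g_{1}$ as the transversal derivative of $\phi$ along $\Gamma$.

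The delicate point I foresee is the simultaneous membership of both coefficients of \eqref{eq:phi-from-g0-and-g1} in $\mathcal{S}(\mathcal{T}^{p,r}_{k},[0,1])$ for $S\in\{L,R\}$: the extra smoothness built into $\widetilde{\mathcal{T}^{p}_{k}}$, namely $C^{r+1}$-continuity at those inner knots where $\beta$ does not vanish, is exactly what is needed so that $\beta^{(S)}g_{0}^{\prime}$ in \eqref{eq:C1-char-mixed} does not force a regularity drop below $r$. Once this interplay between the definition of $\widetilde{\mathcal{T}^{p}_{k}}$ and the two instances of \eqref{eq:C1-char-mixed} is checked, the remainder of the argument is a direct transcription of Proposition \ref{prop:representation}.
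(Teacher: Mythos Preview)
Your proof is correct and follows exactly the approach the paper sketches in one line: necessity of \eqref{eq:C1-char-g0}--\eqref{eq:C1-char-mixed} comes from Lemmas~\ref{lem:space_g0}--\ref{lem:space_g1} together with \eqref{eq:rel_gu}, while sufficiency is the reconstruction of $\phi$ via \eqref{eq:phi-from-g0-and-g1} and verification of \eqref{eq:G1condition_function_version2}. The ``delicate point'' you flag in the last paragraph is not actually needed for this corollary, since \eqref{eq:C1-char-mixed} is a hypothesis in the sufficiency direction and nothing further has to be checked; the interplay between the definition of $\widetilde{\mathcal{T}^{p}_{k}}$ and \eqref{eq:C1-char-mixed} only becomes relevant later, in the analysis of $\Gamma_{0}$.
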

\begin{proof}
 The statement follows from Lemma \ref{lem:space_g0}--\ref{lem:space_g1} and
  \eqref{eq:phi-from-g0-and-g1} in Proposition \ref{prop:representation}.
\end{proof}
To investigate the dimension of $\widehat{\mathcal{V}^{1}_{\Gamma}}$, we consider the spaces
\[
\Gamma_{0} = \{ g_{0} \mbox{ such that }  (g_{0}, g_{1}) \in
\widehat{\mathcal{V}^{1}_{\Gamma}}  \text{ for some }  g_{1}  \}
\equiv \{ \phi \circ \ab{F}_{0}, \mbox{ such that } \phi \in \mathcal{V}^{1}_{2} \}
\]
and 
\[
\Gamma_{1} = \{ (0, g_{1}) \in \widehat{\mathcal{V}^{1}_{\Gamma}} \}.
\]
Clearly, $\dim \widehat{\mathcal{V}^{1}_{\Gamma}} = \dim \Gamma_0 + \dim \Gamma_{1}$. The following two lemmas state the dimension of $\Gamma_{0}$ and $\Gamma_{1}$.

\begin{lem}
It holds that 
 \begin{equation}
   \label{eq:Gamma0-characterization}
\Gamma_{0} =   \mathcal{S}(\widetilde{\mathcal{T}^{p}_{k}},[0,1])
 \end{equation} 
and consequently 
\[
\dim \Gamma_{0} = p + k (p-r-1) + 1 +  z_{\beta}.
\]
\end{lem}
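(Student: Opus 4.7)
The plan is to prove the set equality \eqref{eq:Gamma0-characterization} first, after which the dimension formula follows from standard spline dimension counts. The inclusion $\Gamma_{0} \subseteq \mathcal{S}(\widetilde{\mathcal{T}^{p}_{k}},[0,1])$ is immediate from Corollary \ref{cor:characterization-of-C1-functions}, equation \eqref{eq:C1-char-g0}: every $g_{0} \in \Gamma_{0}$ is, by definition, the first component of some $(g_{0},g_{1}) \in \widehat{\mathcal{V}^{1}_{\Gamma}}$.

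For the reverse inclusion, given $g_{0} \in \mathcal{S}(\widetilde{\mathcal{T}^{p}_{k}},[0,1])$ I will exhibit some $g_{1} \in \mathcal{S}(\mathcal{T}^{p-d_{\alpha},r-1}_{k},[0,1])$ making $\alpha^{(S)} g_{1} + \beta^{(S)} g_{0}^\prime \in \mathcal{S}(\mathcal{T}^{p,r}_{k},[0,1])$ for $S \in \{L,R\}$, which by Corollary \ref{cor:characterization-of-C1-functions} places $(g_{0},g_{1})$ in $\widehat{\mathcal{V}^{1}_{\Gamma}}$. I would split by cases. If $\beta \neq 0$ and $z_{\beta}=0$, then $g_{0}^\prime \in \mathcal{S}(\mathcal{T}^{p-1,r}_{k},[0,1])$ and, since $\beta^{(S)}$ is linear, $\beta^{(S)} g_{0}^\prime \in \mathcal{S}(\mathcal{T}^{p,r}_{k},[0,1])$, so $g_{1}=0$ works. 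If $\beta=0$, then $\alpha^{(L)}\beta^{(R)} = \alpha^{(R)}\beta^{(L)}$ combined with $\gcd(\alpha^{(L)},\alpha^{(R)})=1$ forces $\beta^{(S)} = \alpha^{(S)} s$ for a common polynomial $s$, and the choice $g_{1} = -s\, g_{0}^\prime$ makes both expressions vanish identically.

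The delicate case, and the one I expect to be the main obstacle, is $\beta \neq 0$ with $z_{\beta} \in \{1,2\}$. Here $g_{0}$ has only $C^{r}$ smoothness at each root $\tau_{\ell}$ of $\beta$, so $g_{0}^\prime$ carries a non-trivial $r$-th derivative jump there, and $\beta^{(S)} g_{0}^\prime$ inherits this singularity. I would cancel it by taking $g_{1}$ with a matching $r$-th derivative jump at $\tau_{\ell}$ (and $C^{r}$ at every other inner knot, so no extra jumps are introduced), determined by the $2 \times 2$ system
\begin{equation*}
\alpha^{(S)}(\tau_{\ell})\bigl[g_{1}^{(r)}\bigr]_{\tau_{\ell}} + \beta^{(S)}(\tau_{\ell})\bigl[g_{0}^{(r+1)}\bigr]_{\tau_{\ell}} = 0, \quad S \in \{L,R\}.
\end{equation*}
This system is consistent precisely because its determinant equals $\beta(\tau_{\ell})=0$, so the required value $\bigl[g_{1}^{(r)}\bigr]_{\tau_{\ell}}$ is well defined and can be realized by a B-spline in $\mathcal{S}(\mathcal{T}^{p-d_{\alpha},r-1}_{k},[0,1])$ supported near $\tau_{\ell}$; the same construction at $\tau_{\ell'}$ covers $z_{\beta}=2$. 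The essential AS $G^{1}$ input is exactly the vanishing of $\beta$ at the exceptional knots, which is what makes the jump conditions consistent for both $S=L$ and $S=R$.

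Once \eqref{eq:Gamma0-characterization} is established, the dimension formula is a direct computation: $\dim \mathcal{S}(\mathcal{T}^{p,r+1}_{k},[0,1]) = p + k(p-r-1) + 1$, and each additional knot repetition at a root of $\beta$ raises the dimension by one, giving $p + k(p-r-1) + 1 + z_{\beta}$ for $z_{\beta} \in \{0,1,2\}$; the case $\beta=0$ corresponds to $z_{\beta}=k$ with $\dim \mathcal{S}(\mathcal{T}^{p,r}_{k},[0,1]) = p + k(p-r) + 1 = p + k(p-r-1) + 1 + k$, in agreement with the stated formula.
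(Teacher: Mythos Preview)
Your argument is correct and follows the same overall strategy as the paper: invoke Corollary~\ref{cor:characterization-of-C1-functions} for the inclusion $\Gamma_{0}\subseteq\mathcal{S}(\widetilde{\mathcal{T}^{p}_{k}},[0,1])$, then split by cases on $\beta$ and $z_{\beta}$ and build a companion $g_{1}$ for each $g_{0}$. The execution differs in two places. For $\beta=0$ the paper simply sets $g_{1}=0$, which tacitly uses the particular choice $\beta^{(L)}=\beta^{(R)}=0$; your reduction $\beta^{(S)}=\alpha^{(S)}s$ with $g_{1}=-s\,g_{0}'$ is more careful and works for any admissible $\beta^{(L)},\beta^{(R)}$. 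For $z_{\beta}\in\{1,2\}$ the paper writes down an explicit global choice, namely $g_{1}(v)=-\tfrac{\beta^{(L)}(\tau_{\ell})}{\alpha^{(L)}(\tau_{\ell})}\,g_{0}'(v)$ (and, for $z_{\beta}=2$, a sum of two such terms after splitting $g_{0}=\hat g_{0}+\tilde g_{0}$), and then checks $C^{r}$-smoothness of $(\beta^{(S)}-\alpha^{(S)}\tfrac{\beta^{(L)}(\tau_{\ell})}{\alpha^{(L)}(\tau_{\ell})})g_{0}'$ directly; you instead argue abstractly via the jump conditions. Your consistency observation (the determinant is $\beta(\tau_{\ell})=0$, and $\alpha^{(S)}(\tau_{\ell})\neq 0$) is right, but one phrase needs tightening: the required $g_{1}$ cannot in general be ``a B-spline'' of $\mathcal{S}(\mathcal{T}^{p-d_{\alpha},r-1}_{k},[0,1])$, since such a B-spline is only $C^{r-1}$ at every interior knot in its support and would introduce unwanted $r$-th-derivative jumps at neighbouring knots. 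What you actually need is an element of $\mathcal{S}(\mathcal{T}^{p-d_{\alpha},r}_{k,\ell},[0,1])\subset\mathcal{S}(\mathcal{T}^{p-d_{\alpha},r-1}_{k},[0,1])$ with the prescribed jump only at $\tau_{\ell}$; such splines exist because $r\leq p-2\leq p-d_{\alpha}-1$, so this is a wording issue rather than a gap. The paper's explicit formula has the practical advantage of feeding directly into the basis construction in Section~\ref{sec:basis}.
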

\begin{proof}
Thanks to Corollary \ref{cor:characterization-of-C1-functions}, we
need to  show that for all 
$g_{0} \in \mathcal{S}(\widetilde{\mathcal{T}^{p}_{k}},[0,1])$ we can
construct $ g_{1} $ such that \eqref{eq:C1-char-g1} and
\eqref{eq:C1-char-mixed} holds.

 In case of $\beta \equiv 0$, and also when $\beta \neq 0$ with $z_\beta =0$, we can simply set  $g_{1}(v)=0$. 
In the rest of the proof, we always assume  $\beta \neq 0$. 

In case of $z_\beta =1$, we choose
$g_{1}(v)=-\frac{\beta^{(L)}(\tau_{\ell})}{\alpha^{(L)}(\tau_{\ell})}
g'_{0}(v) \in  \mathcal{S}({\mathcal{T}^{p-1,r-1}_{k}},[0,1])$, where $\tau_{\ell}$ is a root of $\beta$.  We need to prove
\eqref{eq:C1-char-mixed}, that is, to show that 
\begin{equation}
  \label{eq:temp-1}
  \left(\beta^{(S)}-\alpha^{(S)} \frac{\beta^{(L)}(\tau_{\ell})}{\alpha^{(L)}(\tau_{\ell})} \right)g'_{0},
\end{equation}
is $C^{r}$-smooth, for $S \in \{L,R \}$. We only need to check the
$r$-th derivative of \eqref{eq:temp-1}, for $S \in \{L,R \}$, which
is
\[
r \left(\beta^{(S)}\mbox{}'(v) -\alpha^{(S)}\mbox{}'(v)
  \frac{\beta^{(L)}(\tau_{\ell})}{\alpha^{(L)}(\tau_{\ell})}
\right)g^{(r)}_{0}(v ) +  \left(\beta^{(S)}\mbox{}(v) -\alpha^{(S)}\mbox{}(v)
  \frac{\beta^{(L)}(\tau_{\ell})}{\alpha^{(L)}(\tau_{\ell})}
\right)g^{(r+1)}_{0}(v ), 
\]
that is continuous in $[0,1] \setminus \tau_{\ell} $ for  the regularity of $g_{0}
$ and continuous when $v \rightarrow \tau_{\ell}$ since the second
addendum vanishes in the limit. 

In case of $z_\beta =2$, we choose $g_{1}(v)=-(\frac{\beta^{(L)}(\tau_{\ell})}{\alpha^{(L)}(\tau_{\ell})}
\hat{g}'_{0}(v) + \frac{\beta^{(L)}(\tau_{\ell'})}{\alpha^{(L)}(\tau_{\ell'})}
\tilde{g}'_{0}(v)) \in  \mathcal{S}({\mathcal{T}^{p-1,r-1}_{k}},[0,1])$, where 
$\hat{g}_{0} \in \mathcal{S}({\mathcal{T}^{p,r+1}_{k,\ell}},[0,1])$ and $\tilde{g}_{0} \in \mathcal{S}({\mathcal{T}^{p,r+1}_{k,\ell'}},[0,1])$ are non-unique functions 
such that $g_{0} = \hat{g}_{0} + \tilde{g}_{0}$, and $\tau_{\ell}, \tau_{\ell'}$ with the indices $\ell,\ell' \in \{1, \ldots , k \}$ and $\ell \neq \ell'$ are the two 
roots of $\beta$. As before, we need to prove \eqref{eq:C1-char-mixed}, that is, to show that 
\begin{equation}
  \label{eq:temp-3}
  \left(\beta^{(S)}-\alpha^{(S)} \frac{\beta^{(L)}(\tau_{\ell})}{\alpha^{(L)}(\tau_{\ell})} \right)\hat{g}'_{0} + \left(\beta^{(S)}-\alpha^{(S)} 
  \frac{\beta^{(L)}(\tau_{\ell'})}{\alpha^{(L)}(\tau_{\ell'})} \right)\tilde{g}'_{0},
\end{equation}
is $C^{r}$-smooth, for $S \in \{L,R \}$. We only need to check again that the $r$-th derivative of \eqref{eq:temp-3}, for $S \in \{L,R \}$, is continuous 
in $[0,1]$, which can be done analogous to the case $z_{\beta}=1$. The dimension of $\Gamma_0$ follows directly from the definition of the spline space. This concludes 
the proof.
\end{proof}

\begin{lem}
It holds that
 \begin{equation}
   \label{eq:Gamma1-characterization}
\Gamma_{1} =  \{ 0 \} \times  \mathcal{S}(\mathcal{T}^{p-d_{\alpha},r}_{k})
 \end{equation} 
and consequently
\[
\dim \Gamma_{1} = p + k (p-r-1) + (1 - d_{\alpha})(k+1).
\]
\end{lem}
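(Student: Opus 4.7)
The plan is to apply Corollary \ref{cor:characterization-of-C1-functions} with $g_0 = 0$ and reduce the problem to characterizing those $g_1 \in \mathcal{S}(\mathcal{T}^{p-d_\alpha,r-1}_k,[0,1])$ for which $\alpha^{(S)} g_1 \in \mathcal{S}(\mathcal{T}^{p,r}_k,[0,1])$ simultaneously for $S\in\{L,R\}$. With $g_0=0$, condition \eqref{eq:C1-char-g0} is trivial, condition \eqref{eq:C1-char-g1} gives the ambient space for $g_1$, and \eqref{eq:C1-char-mixed} collapses to the purely algebraic requirement $\alpha^{(S)} g_1 \in \mathcal{S}(\mathcal{T}^{p,r}_k,[0,1])$. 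So the content of the lemma is to show that these conditions force $g_1 \in \mathcal{S}(\mathcal{T}^{p-d_\alpha,r}_k,[0,1])$, and conversely that any such $g_1$ works.

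The converse direction is easy: if $g_1\in \mathcal{S}(\mathcal{T}^{p-d_\alpha,r}_k,[0,1])$, then $\alpha^{(S)} g_1$ has degree at most $(p-d_\alpha)+d_\alpha=p$, and multiplication by a polynomial does not reduce the $C^r$-regularity at inner knots, so $\alpha^{(S)} g_1\in \mathcal{S}(\mathcal{T}^{p,r}_k,[0,1])$. For the forward direction, the case $d_\alpha=0$ is immediate, since then $\alpha^{(L)},\alpha^{(R)}$ are nonzero constants and the condition $\alpha^{(S)} g_1\in\mathcal{S}(\mathcal{T}^{p,r}_k,[0,1])$ is equivalent to $g_1\in\mathcal{S}(\mathcal{T}^{p,r}_k,[0,1])$.

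The core case $d_\alpha=1$ is where the main obstacle lies, and it is handled by a jump analysis at each inner knot $\tau_i$. Since $g_1 \in \mathcal{S}(\mathcal{T}^{p-1,r-1}_k,[0,1])$, the derivative $g_1^{(r)}$ can have a jump $[g_1^{(r)}]_{\tau_i}$ at every inner knot. Applying the Leibniz rule to $(\alpha^{(S)} g_1)^{(r)}$ and using that $g_1^{(r-j)}$ is continuous at $\tau_i$ for $j\geq 1$, one obtains
\[
\bigl[(\alpha^{(S)} g_1)^{(r)}\bigr]_{\tau_i} = \alpha^{(S)}(\tau_i)\,[g_1^{(r)}]_{\tau_i}.
\]
Requiring this to vanish for both $S\in\{L,R\}$ simultaneously, and invoking the assumption $\deg(\gcd(\alpha^{(L)},\alpha^{(R)}))=0$ from Definition \ref{def:asG1} (so the two linear factors have no common root), forces $[g_1^{(r)}]_{\tau_i}=0$ at every inner knot. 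Hence $g_1$ is in fact $C^r$ at every inner knot, i.e. $g_1 \in \mathcal{S}(\mathcal{T}^{p-1,r}_k,[0,1]) = \mathcal{S}(\mathcal{T}^{p-d_\alpha,r}_k,[0,1])$.

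Finally, the dimension count is a direct application of the standard spline dimension formula to $\mathcal{S}(\mathcal{T}^{p-d_\alpha,r}_k,[0,1])$, which has dimension $(p-d_\alpha)+k(p-d_\alpha-r)+1$; a brief rearrangement rewrites this as $p+k(p-r-1)+(1-d_\alpha)(k+1)$, matching the claimed formula. The only real subtlety throughout is the appeal to the coprimality of $\alpha^{(L)}$ and $\alpha^{(R)}$ in the jump argument; the remainder is routine.
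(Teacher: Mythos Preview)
Your proof is correct and follows the same route as the paper: both invoke Corollary~\ref{cor:characterization-of-C1-functions} with $g_0=0$ to reduce the characterization of $\Gamma_1$ to the condition $\alpha^{(S)} g_1 \in \mathcal{S}(\mathcal{T}^{p,r}_k,[0,1])$ for $S\in\{L,R\}$, together with $g_1 \in \mathcal{S}(\mathcal{T}^{p-d_\alpha,r-1}_k,[0,1])$. The paper's proof is a single sentence that asserts this is equivalent to $g_1 \in \mathcal{S}(\mathcal{T}^{p-d_\alpha,r}_k)$ without further comment; your jump analysis via the Leibniz rule, together with the coprimality of $\alpha^{(L)}$ and $\alpha^{(R)}$, supplies exactly the detail the paper omits. (One could alternatively note that \eqref{eq:condition_alpha} already forces $\alpha^{(S)}(\tau_i)\neq 0$ on $[0,1]$, making the coprimality appeal unnecessary, but your argument is valid as written.)
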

\begin{proof}
Thanks to Corollary \ref{cor:characterization-of-C1-functions},  $(0,g_1) \in \Gamma_{1}$ if and only if $
g_{1} \in   \mathcal{S}(\mathcal{T}^{p-d_{\alpha},r}_{k} )$. 
The dimension of $\Gamma_1$ follows directly from the definition of the spline space. 
\end{proof}

This leads to the following result.
\begin{lem}
The dimension of $\mathcal{V}^{1}_{2}$ is equal to
\[
\dim \mathcal{V}^{1}_{2}= 2(p+k(p-r-1))+1 + (1-d_{\alpha})(k+1) +  z_{\beta}.
\]
\end{lem}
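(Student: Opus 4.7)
The plan is to obtain the dimension formula by simply adding the dimensions of $\Gamma_0$ and $\Gamma_1$ provided by the two preceding lemmas, after justifying the two reductions $\dim\mathcal{V}^{1}_{2} = \dim \widehat{\mathcal{V}^{1}_{\Gamma}}$ and $\dim \widehat{\mathcal{V}^{1}_{\Gamma}} = \dim \Gamma_0 + \dim \Gamma_1$.

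First, I would invoke Corollary~\ref{cor:space-dimensions-at-Gamma}, which identifies $\dim \mathcal{V}^{1}_{2}$ with $\dim \widehat{\mathcal{V}^{1}_{\Gamma}}$ via the trace/transversal-derivative map $\phi \mapsto (\phi, \nabla\phi\cdot \ab{d}) \circ \ab{F}_0$; this is bijective by Proposition~\ref{prop:representation}. Second, I would formalize the decomposition $\dim \widehat{\mathcal{V}^{1}_{\Gamma}} = \dim \Gamma_0 + \dim \Gamma_1$ by considering the linear projection $\pi : \widehat{\mathcal{V}^{1}_{\Gamma}} \to \Gamma_0$, $(g_0,g_1) \mapsto g_0$. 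Its image is $\Gamma_0$ by definition, and its kernel consists exactly of the pairs $(0,g_1) \in \widehat{\mathcal{V}^{1}_{\Gamma}}$, which is $\Gamma_1$ (identified with its second component). The rank-nullity theorem then gives the additive decomposition.

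Finally, plugging in the two dimension formulas already established,
\[
\dim \Gamma_0 = p + k(p-r-1) + 1 + z_\beta, \qquad \dim \Gamma_1 = p + k(p-r-1) + (1-d_\alpha)(k+1),
\]
and summing yields exactly $2(p + k(p-r-1)) + 1 + (1-d_\alpha)(k+1) + z_\beta$.

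There is no substantial obstacle: all the nontrivial content (the characterization of $\Gamma_0$ as $\mathcal{S}(\widetilde{\mathcal{T}^{p}_{k}},[0,1])$ and of $\Gamma_1$ via the $C^{r-1}$ pullback space $\mathcal{S}(\mathcal{T}^{p-d_\alpha,r}_{k},[0,1])$) is already done in the two previous lemmas, and the one-to-one correspondence between $\mathcal{V}^{1}_{2}$ and $\widehat{\mathcal{V}^{1}_{\Gamma}}$ is handed to us by Proposition~\ref{prop:representation}. The only place one has to be slightly careful is verifying that the kernel-image decomposition really does split the dimension additively, but this is a standard consequence of rank-nullity applied to the projection onto the first component.
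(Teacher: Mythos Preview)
Your proposal is correct and follows exactly the paper's (implicit) approach: the paper simply writes ``This leads to the following result'' after stating Corollary~\ref{cor:space-dimensions-at-Gamma}, the identity $\dim \widehat{\mathcal{V}^{1}_{\Gamma}} = \dim \Gamma_0 + \dim \Gamma_1$, and the two dimension lemmas for $\Gamma_0$ and $\Gamma_1$. Your only addition is spelling out the rank--nullity justification for the additive split, which the paper leaves as ``clearly''; note in passing that $\mathcal{S}(\mathcal{T}^{p-d_\alpha,r}_{k},[0,1])$ is a $C^{r}$ space, not $C^{r-1}$ as you wrote.
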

Finally, we obtain:
\begin{thm}
The dimension of $\mathcal{V}^{1}$ is equal to
\[ 
\dim \mathcal{V}^{1} = \underbrace{2 (p+k(p-r)-1)(p+k(p-r)+1)}_{\dim \mathcal{V}^{1}_{1}} + \underbrace{2(p+k(p-r-1))+1 + (1-d_{\alpha})(k+1) +  
z_{\beta}}_{\dim \mathcal{V}^{1}_{2}}.
\]
\end{thm}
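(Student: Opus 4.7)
The theorem is an immediate consequence of the material assembled over the preceding pages, so my plan is simply to collect the pieces. The starting point is the direct sum decomposition
\begin{equation*}
  \mathcal{V}^{1} = \mathcal{V}^{1}_{1} \oplus \mathcal{V}^{1}_{2},
\end{equation*}
which was established prior to the lemma computing $\dim \mathcal{V}^{1}_{1}$ and follows from the definitions of the two subspaces (the first collects coefficients away from the interface, the second collects only the interface coefficients, and the defining index sets $I\setminus I_{\Gamma}$ and $I_{\Gamma}$ partition $I$). Once this is in place, one has $\dim \mathcal{V}^{1} = \dim \mathcal{V}^{1}_{1} + \dim \mathcal{V}^{1}_{2}$ by the dimension formula for internal direct sums.

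From there, I would just substitute. The preceding lemma on basis functions of the first kind gives
\begin{equation*}
  \dim \mathcal{V}^{1}_{1} = 2(p+k(p-r)-1)(p+k(p-r)+1),
\end{equation*}
and the lemma immediately above the theorem gives
\begin{equation*}
  \dim \mathcal{V}^{1}_{2} = 2(p+k(p-r-1))+1 + (1-d_{\alpha})(k+1) + z_{\beta}.
\end{equation*}
Adding these two expressions yields the claimed formula, with the two underbraces in the statement matching exactly the two summands.

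There is essentially no hard step left: all the delicate work (Proposition on the bijection between $(g_{0},g_{1})$ and $\phi$, the characterization in the corollary, and the explicit computations of $\dim \Gamma_{0}$ and $\dim \Gamma_{1}$) has already been done. The only thing I would take some care over is noting explicitly that $\mathcal{V}^{1}_{1} \cap \mathcal{V}^{1}_{2} = \{0\}$ (from $I_{\Gamma} \cap (I\setminus I_{\Gamma}) = \emptyset$) and that every $\phi \in \mathcal{V}^{1}$ decomposes as a sum of a function in $\mathcal{V}^{1}_{1}$ and one in $\mathcal{V}^{1}_{2}$, since the $C^{0}$ condition \eqref{eq:G0condition_function} and the $G^{1}$ condition \eqref{eq:G1condition_function} are both linear constraints only on the coefficients with index in $I_{\Gamma}$, so setting the interface coefficients to zero preserves membership in $\mathcal{V}^{1}$. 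With that verification, the theorem follows by arithmetic.
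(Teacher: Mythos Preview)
Your proposal is correct and matches the paper's approach exactly: the theorem is stated without proof in the paper, being an immediate consequence of the direct sum decomposition $\mathcal{V}^{1} = \mathcal{V}^{1}_{1} \oplus \mathcal{V}^{1}_{2}$ together with the two preceding lemmas computing $\dim \mathcal{V}^{1}_{1}$ and $\dim \mathcal{V}^{1}_{2}$. Your added remark justifying the direct sum is a welcome clarification but introduces nothing new methodologically.
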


\begin{rem}
Our results are in agreement with those in \cite{KaViJu15}, where the special case of bilinear geometry mappings $\ab{F}^{(L)}$ and $\ab{F}^{(R)}$ with $d_{\alpha}=1$, 
$z_{\beta}=0$, $p=3,4$ and $r=1$ was considered.
\end{rem}

\section{Basis of the space $\mathcal{V}^{1}$ for AS $G^{1}$ two-patch geometries} \label{sec:basis}

We present an explicit basis construction for the space $\mathcal{V}^{1}$ for AS $G^{1}$ two-patch geometries. Our basis will consist of a basis for the 
space $\mathcal{V}^{1}_{1}$ and of a basis for the space $\mathcal{V}^{1}_{2}$.  

\subsection{Basis of $\mathcal{V}^{1}_{1}$}

The functions in $\mathcal{V}^{1}_{1}$ are not influenced by the interface $\Gamma$. 
Hence, a basis of $\mathcal{V}^{1}_{1}$ can be constructed in a straightforward way from the standard basis on a single patch. Consider for 
$i=2, \ldots, p+k(p-r)$, $j=0,\ldots, p+k(p-r)$, the isogeometric functions 
$\phi^{(L)}_{i,j}$ and $\phi^{(R)}_{i,j}$ determined by
\[
\phi^{(L)}_{i,j}: \mbox{ } g^{(L)}(u,v)=N_{i,j}^{p,r}(u,v), \mbox{ }g^{(R)}(u,v)=0 \mbox{ and }
\phi^{(R)}_{i,j}: \mbox{ }g^{(L)}(u,v)=0, \mbox{ } g^{(R)}(u,v)=N_{i,j}^{p,r}(u,v).
\]
Then the collection of isogeometric functions 
$\{ \phi^{(S)}_{i,j} \}_{(S,i,j) \in I \setminus I_{\Gamma}}$ forms a basis of the space $\mathcal{V}^{1}_{1}$. Note that these basis functions do not depend 
on the geometry mappings $\ab{F}^{(L)}$ and $\ab{F}^{(R)}$ (and therefore do not depend on $\alpha^{(L)}$, $\alpha^{(R)}$ and $\beta$, too). This is in contrast to 
the basis functions of $\mathcal{V}^{1}_{2}$, see Section~\ref{subsec:basis_second}.  

\subsection{Basis of $\mathcal{V}^{1}_{2}$} \label{subsec:basis_second}

We present a construction of a basis of the space $\mathcal{V}^{1}_{2}$. Thereby, the resulting basis functions depend on $\alpha^{(L)}$, $\alpha^{(R)}$, $\beta^{(L)}$ 
and $\beta^{(R)}$, and hence on the geometry mappings $\ab{F}^{(L)}$ and $\ab{F}^{(R)}$. The idea is to generate a basis of $\mathcal{V}^{1}_{2}$ by means of a basis of 
$\Gamma_{0}$ and of a basis of $\Gamma_{1}$ (technically by means of a basis of $\widehat{\mathcal{V}^{1}_{\Gamma}}$), since Proposition~\ref{prop:representation} 
provides an explicit representation for the desired basis functions of $\mathcal{V}^{1}_{2}$, given by
\begin{equation} \label{eq:representation}
 g^{(S)}(u,v)= g_{0}(v)(N^{p,r}_{0}(u) + N^{p,r}_{1}(u)) + \left( \alpha^{(S)}(v) g_{1}(v) + \beta^{(S)}(v) g_{0}^\prime(v) \right)\frac{1}{(k+1)p}N^{p,r}_{1}(u), 
 \end{equation}  
for $S \in \{L,R \}$. More precisely, the construction works as follows:

\begin{enumerate}
 \item We select that pair of functions $\beta^{(L)}$ and $\beta^{(R)}$, such that \eqref{eq:beta} holds and which minimizes the term
 \begin{equation} \label{eq:min_beta}
  ||\beta^{(L)} ||^{2}_{L_{2}([0,1])} +  ||\beta^{(R)} ||^{2}_{L_{2}([0,1])}.
 \end{equation}
In case of ${\beta} = 0$, we have $\beta^{(L)}(v) = \beta^{(R)}(v)=0$.

\item Let $\tilde{n} = \dim \mathcal{S}(\widetilde{\mathcal{T}^{p}_{k}},[0,1])$.
Depending on $\beta$, we first choose for the space $\mathcal{S}(\widetilde{\mathcal{T}^{p}_{k}},[0,1])$ a basis $\tilde{N}_{i}$, $i=1, \ldots, \tilde{n}-1$, and then 
for each pair $(g_{0},g_{1})= (\tilde{N}_{i},\tilde{g}_{1,i})$, $i =0,\ldots, \tilde{n}-1$,  the function $\tilde{g}_{1,i}$ as follows: 
\begin{itemize}
\item \emph{Case ${\beta} =0 \text{ or } z_{\beta} = 0$}: The functions $\tilde{N}_{i}$, $i=1, \ldots, \tilde{n}-1$, are the B-splines of 
$\mathcal{S}(\widetilde{\mathcal{T}^{p}_{k}},[0,1])$, and $\tilde{g}_{1,i}(v)=0$.
\item \emph{Case ${\beta} \neq 0$ \text{ and } $z_{\beta}=1$:} Let $\tau_{\ell}$ with the index $\ell \in \{1, \ldots, k \}$ be the root of $\beta$. The functions 
$\tilde{N}_{i}$, $i=1, \ldots, \tilde{n}-2$, are the B-splines of $\mathcal{S}(\mathcal{T}_{k}^{p,r+1},[0,1])$ and the function $\tilde{N}_{\tilde{n}-1}$ is one of the 
B-splines of $\mathcal{S}(\mathcal{T}_{k,\ell}^{p,r+1},[0,1])$ with the property $\tilde{N}_{\tilde{n}-1}(\tau_{\ell}) \neq 0$. 
The function $\tilde{g}_{1,i}$ is given by
 \[
  \tilde{g}_{1,i}(v) = \left\{ \begin{array}{cl}
                     -\frac{\beta^{(L)}(\tau_{\ell})}{\alpha^{(L)}(\tau_{\ell})} \tilde{N}'_{i}(v) & \mbox{if } i=\tilde{n}-1, \\
                     0 & \mbox{otherwise}.
                    \end{array}
                    \right.
 \]
\item \emph{Case ${\beta} \neq 0$ \text{ and }$z_{\beta}=2$:} Let $\tau_{\ell},\tau_{\ell'}$ with the indices $\ell,\ell' \in \{1, \ldots, k \}$ and $\ell \neq \ell'$ 
be the two roots of $\beta$. The functions $\tilde{N}_{i}$, $i=1, \ldots, \tilde{n}-3$, are the B-splines of $\mathcal{S}(\mathcal{T}_{k}^{p,r+1},[0,1])$, the 
function $\tilde{N}_{\tilde{n}-2}$ is one of the B-splines of $\mathcal{S}(\mathcal{T}_{k,\ell}^{p,r+1},[0,1])$ with the property 
$\tilde{N}_{\tilde{n}-2}(\tau_{\ell}) \neq 0$, and the function $\tilde{N}_{\tilde{n}-1}$ is one of the B-splines of $\mathcal{S}(\mathcal{T}_{k,\ell'}^{p,r+1},[0,1])$ 
with the property $\tilde{N}_{\tilde{n}-1}(\tau_{\ell'}) \neq 0$. The function $\tilde{g}_{1,i}$ is given by
 \[
  \tilde{g}_{1,i}(v) = \left\{ \begin{array}{cl}
                     -\frac{\beta^{(L)}(\tau_{\ell})}{\alpha^{(L)}(\tau_{\ell})} \tilde{N}'_{i}(v) & \mbox{if } i=\tilde{n}-2, \\
                      -\frac{\beta^{(L)}(\tau_{\ell'})}{\alpha^{(L)}(\tau_{\ell'})} \tilde{N}'_{i}(v) & \mbox{if } i=\tilde{n}-1, \\
                     0 & \mbox{otherwise}.
                    \end{array}
                    \right.
 \]
\end{itemize}
Then each pair $(g_{0},g_{1})= (\tilde{N}_{i},\tilde{g}_{1,i})$, $i=0,\ldots,\tilde{n}-1$, determines a basis function $\phi_{0,i}$ 
 via representation~\eqref{eq:representation}, i.e. 
 \begin{equation*} 
  \phi_{0,i}: \mbox{ }g^{(S)}(u,v) = \tilde{N}_{i}(v)(N^{p,r}_{0}(u) + N^{p,r}_{1}(u)) + ( \alpha^{(S)}(v) \tilde{g}_{1,i}(v) + 
  \beta^{(S)}(v) \tilde{N}'_{i}(v) )\frac{1}{(k+1)p}N^{p,r}_{1}(u),
 \end{equation*}
for $S \in \{L,R \}$. In case of ${\beta}\neq 0 $, the representation of $\phi_{0,i}$ simplifies to 
 \begin{equation} \label{eq:simplified_representation}
  \phi_{0,i}: \mbox{ }g^{(S)}(u,v) = \tilde{N}_{i}(v)(N^{p,r}_{0}(u) + N^{p,r}_{1}(u)) +  
  \beta^{(S)}(v) \tilde{N}'_{i}(v) \frac{1}{(k+1)p}N^{p,r}_{1}(u), \mbox{ } S \in \{L,R \},
 \end{equation}
 except for the index $i=\tilde{n}-1$ if $z_{\beta}=1$ and for the index $i=\tilde{n}-2$ or $i=\tilde{n}-1$ if $z_{\beta}=2$. In case of $\beta= 0$, the representation 
 of $\phi_{0,i}$ even simplifies to
 \[
  \phi_{0,i}: \mbox{ }g^{(S)}(u,v) = N_{i}^{p,r}(v)(N^{p,r}_{0}(u) + N^{p,r}_{1}(u)), \mbox{ } S \in \{L,R \}.
 \]
\item Let $\bar{n} = \dim \mathcal{S}(\mathcal{T}^{p-d_{\alpha},r}_{k},[0,1])$, and let $\bar{N}_{j}$, $j =0,\ldots, \bar{n}-1$, be the B-spline basis functions of the 
space $\mathcal{S}(\mathcal{T}^{p-d_{\alpha},r}_{k},[0,1])$. For each pair $(g_{0},g_{1})= (0,\bar{N}_{j})$,  $j =0,\ldots, \bar{n}-1$, a basis function $\phi_{1,j}$ is 
defined via representation~\eqref{eq:representation}, i.e.
\[
 \phi_{1,j}: \mbox{ }g^{(S)}(u,v) = \frac{1}{(k+1)p} \alpha^{(S)}(v) \bar{N}_{j}(v) N^{p,r}_{1}(u), \mbox{ }S \in \{ L, R \}.
\]
\end{enumerate}
Then the collection of isogeometric functions $\{\phi_{0,i}, \phi_{1,j}  \}_{i=0,\ldots,\tilde{n}-1;j=0,\ldots,\bar{n}-1}$ forms a basis of the space 
$\mathcal{V}^{1}_{2}$. 

\begin{rem}
The basis functions $\phi_{0,i}$ and $\phi_{1,j}$ have small local supports which are comparable with the supports of the basis functions considered 
in \cite{KaBuBeJu16,KaViJu15}, which were constructed for the special case of bilinear geometry mappings $\ab{F}^{(L)}$ and $\ab{F}^{(R)}$ with 
$d_{\alpha}=1$, $z_{\beta}=0$, $p=3,4$, $r=1$ and $\tau_{i}=\frac{i}{k+1}$ for $i=1,\ldots,k$. 
\end{rem}

\begin{rem}
Our selection of the functions $\beta^{(L)}$ and $\beta^{(R)}$, as described above, is of course only one possibility. It would be even possible to choose for each 
function $\phi_{0,i}$ a different pair of functions $\beta^{(L)}$ and $\beta^{(R)}$, if desired. In addition, in case of $\beta \neq 0$ and $z_{\beta}=1$, the choice of 
the functions $\beta^{(L)}$ and $\beta^{(R)}$ satisfying $\beta^{(L)}(\tau_{\ell})=\beta^{(R)}(\tau_{\ell})=0$, where $\tau_{\ell}$ with the index 
$\ell \in \{1,\ldots , k \}$ is a root of $\beta$, would also lead for this case to the simplified representation \eqref{eq:simplified_representation} for all 
functions $\phi_{0,i}$.
\end{rem}

\begin{ex} \label{ex:functions} 
We consider the AS $G^{1}$ two-patch geometry~$\ab{F}^{(L)}$, $\ab{F}^{(R)} \in \mathcal{P}^{3}([0,1]^{2})$, which is shown in Fig.~\ref{fig:ex_functions} and is 
given by the control points
\[ \small \ab{c}_{0,0}^{(L)} = (\frac{3}{50}, -\frac{1}{20}), \quad
\ab{c}_{1,0}^{(L)}= -(\frac{774887}{668100}, \frac{19799}{267240}), \quad 
\ab{c}_{2,0}^{(L)} = -(\frac{189}{100}, \frac{107}{100}), \quad
\ab{c}_{3,0}^{(L)} =-(\frac{151}{50}, \frac{49}{100}), 
\]
\[ \small \ab{c}_{0,1}^{(L)} = (\frac{7}{20}, \frac{24}{25}), \quad
\ab{c}_{1,1}^{(L)}= (-\frac{294947}{334050}, \frac{819}{850}), \quad 
\ab{c}_{2,1}^{(L)} = (-\frac{101}{50}, \frac{57}{100}), \quad
\ab{c}_{3,1}^{(L)} =(-\frac{67}{25}, \frac{4}{5}), 
\]
\[ \small \ab{c}_{0,2}^{(L)} = (\frac{47}{100}, 2), \quad
\ab{c}_{1,2}^{(L)}= (-\frac{623281}{801720}, \frac{233057}{117900}), \quad 
\ab{c}_{2,2}^{(L)} = (-\frac{213}{100}, \frac{109}{50}), \quad
\ab{c}_{3,2}^{(L)} =(-\frac{189}{50}, \frac{203}{100}), 
\]
\[ \small \ab{c}_{0,3}^{(L)} = (\frac{1}{20}, \frac{307}{100}), \quad
\ab{c}_{1,3}^{(L)}= (-\frac{422117}{334050}, \frac{1969021}{668100}), \quad 
\ab{c}_{2,3}^{(L)} = (-\frac{201}{100}, \frac{189}{50}), \quad
\ab{c}_{3,3}^{(L)} =(-\frac{84}{25}, \frac{331}{100}), 
\]
and 
\[ \small \ab{c}_{0,0}^{(R)} = \ab{c}_{0,0}^{(L)}, \quad
\ab{c}_{1,0}^{(R)}= (\frac{787217}{801720}, -\frac{50021}{400860}), \quad 
\ab{c}_{2,0}^{(R)}= (\frac{123}{50}, -\frac{61}{100}), \quad
\ab{c}_{3,0}^{(R)}= (\frac{347}{100}, -\frac{6}{25}), 
\]
\[ \small \ab{c}_{0,1}^{(R)} = \ab{c}_{0,1}^{(L)}, \quad
\ab{c}_{1,1}^{(R)}= (\frac{3705053}{3006450}, \frac{2796793}{3006450}), \quad 
\ab{c}_{2,1}^{(R)}= (\frac{113}{50}, \frac{17}{20}), \quad
\ab{c}_{3,1}^{(R)}= (\frac{53}{20}, \frac{113}{100}), 
\]
\[ \small \ab{c}_{0,2}^{(R)} = \ab{c}_{0,2}^{(L)} , \quad
\ab{c}_{1,2}^{(R)}= (\frac{581369}{445400}, \frac{24743903}{12025800}), \quad 
\ab{c}_{2,2}^{(R)}= (2, \frac{107}{50}), \quad
\ab{c}_{3,2}^{(R)}= (\frac{351}{100}, \frac{9}{4}), 
\]
\[ \small \ab{c}_{0,3}^{(R)} = \ab{c}_{0,3}^{(L)}, \quad
\ab{c}_{1,3}^{(R)}= (\frac{267523}{334050}, \frac{1298303}{400860}, \quad 
\ab{c}_{2,3}^{(R)}= (\frac{107}{50}, \frac{319}{100}), \quad
\ab{c}_{3,3}^{(R)}= (\frac{297}{100}, \frac{173}{50}).
\]
The corresponding functions $\alpha^{(L)}$, $\alpha^{(R)}$ and $\beta$ are given by means of Equations~\eqref{eq:common_factor}-\eqref{eq:betabar} and selecting the 
function $\gamma$ as
\[
\gamma(v)= \frac{10000}{8167 + 60 v - 407 v^2 + 516 v^3 + 407 v^4},
\] 
which leads to 
\[
 \alpha^{(L)}(v)= -\frac{3}{2} (9 +v), \mbox{ }  \alpha^{(R)}(v)= -\frac{3}{2} (-7 +v) \mbox{ and } \beta(v)= \frac{1}{12} (15 - 32v + v^{2}),
\]
respectively. The minimization of~\eqref{eq:min_beta} leads to
\[
\beta^{(L)}(v) = -\frac{83}{1194} + \frac{503 v}{3528} \mbox{ and } \beta^{(R)}(v) = -\frac{23}{597} + \frac{152 v}{1791}.  
\]
Clearly, we have  $\ab{F}^{(L)},\ab{F}^{(R)} \in \mathcal{S}(\mathcal{T}^{3,1}_{k},[0,1]^{2})$ with $k \geq 0$, and we obtain for these selections of the geometry 
mappings $\ab{F}^{(L)}$ and $\ab{F}^{(R)}$ that
\[
\dim \mathcal{V}^{1} = 2(2+2k)(4+2k)+7+2k= 23 + 26 k + 8 k^2.
\]
Below, let $\mathcal{T}^{3,1}_{k}$ be the uniform knot vector 
\[
\mathcal{T}^{3,1}_{k}=(0,0,0,0,\frac{1}{k+1},\frac{1}{k+1},\frac{2}{k+1},\frac{2}{k+1},\ldots,\frac{k}{k+1},\frac{k}{k+1},1,1,1,1). \]
Fig.~\ref{fig:ex_functions} shows the graphs of the resulting basis functions $\phi_{0,i}$ and $\phi_{1,j}$ of the space $\mathcal{V}^{1}_{2}$ for $k=2$. 

\begin{figure}[htp]
\centering\footnotesize
\begin{tabular}{ccc}
\includegraphics[width=4.1cm,clip]{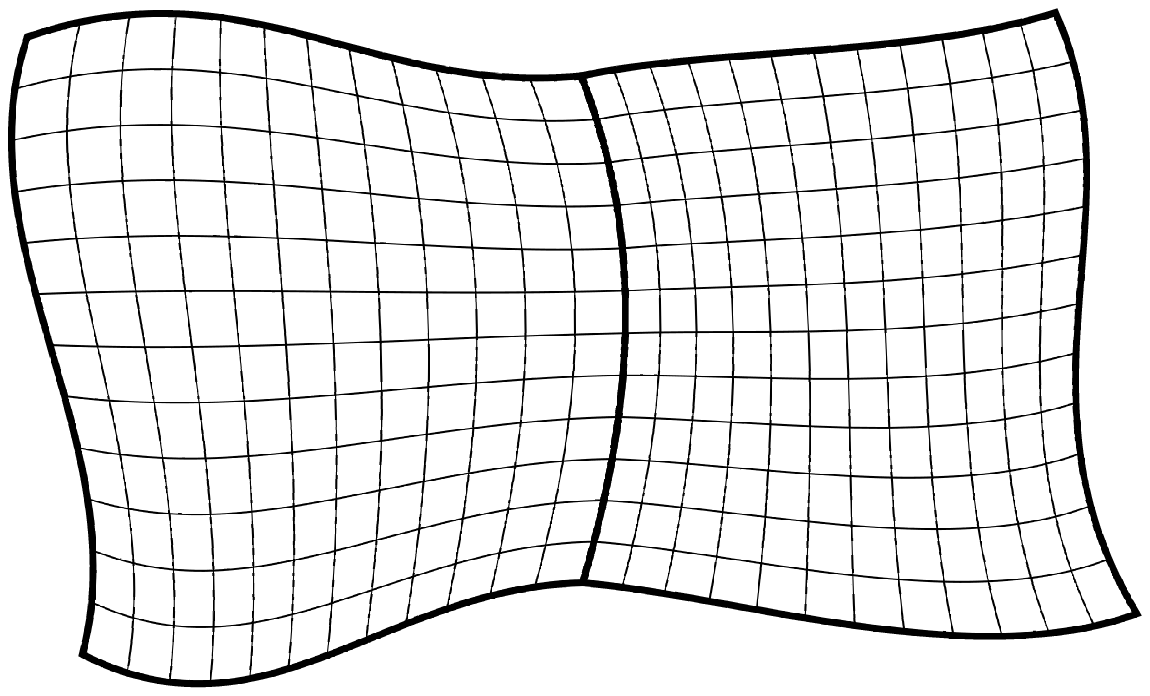}  &
\includegraphics[width=4.1cm,clip]{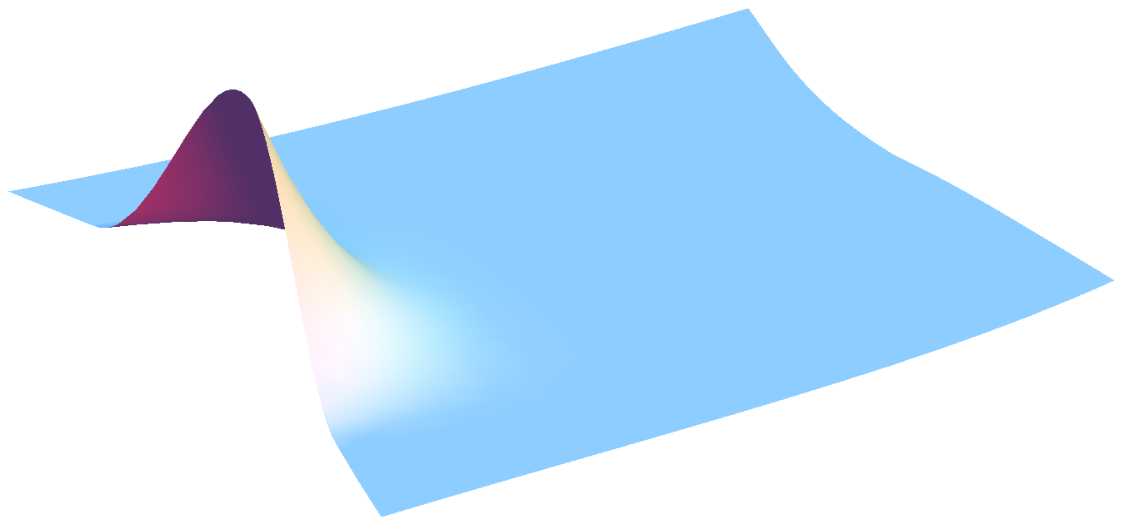} &
\includegraphics[width=4.1cm,clip]{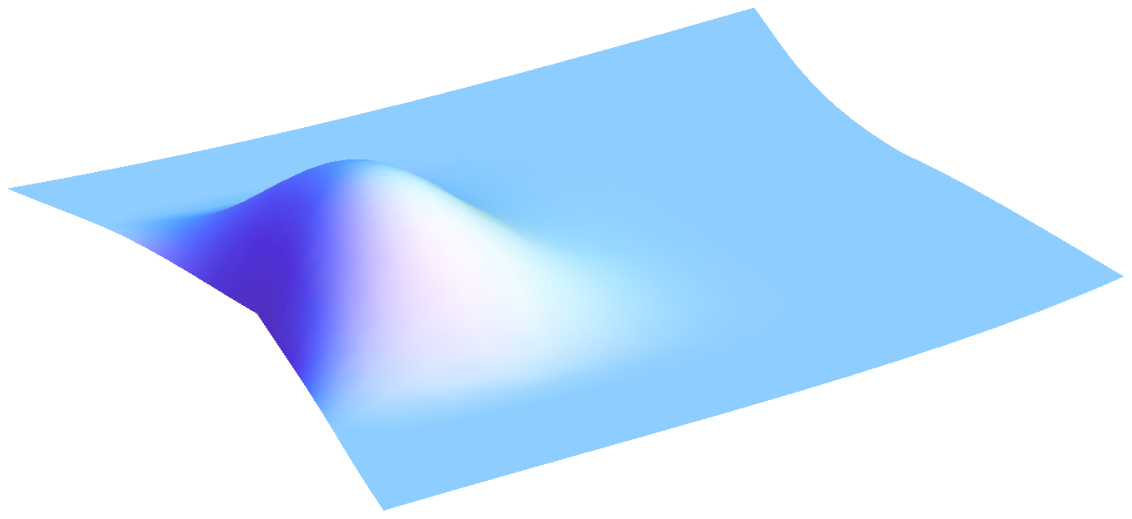} \\
AS $G^{1}$ two-patch geometry & $\phi_{0,0}$ & $\phi_{0,1}$ \\
\includegraphics[width=4.1cm,clip]{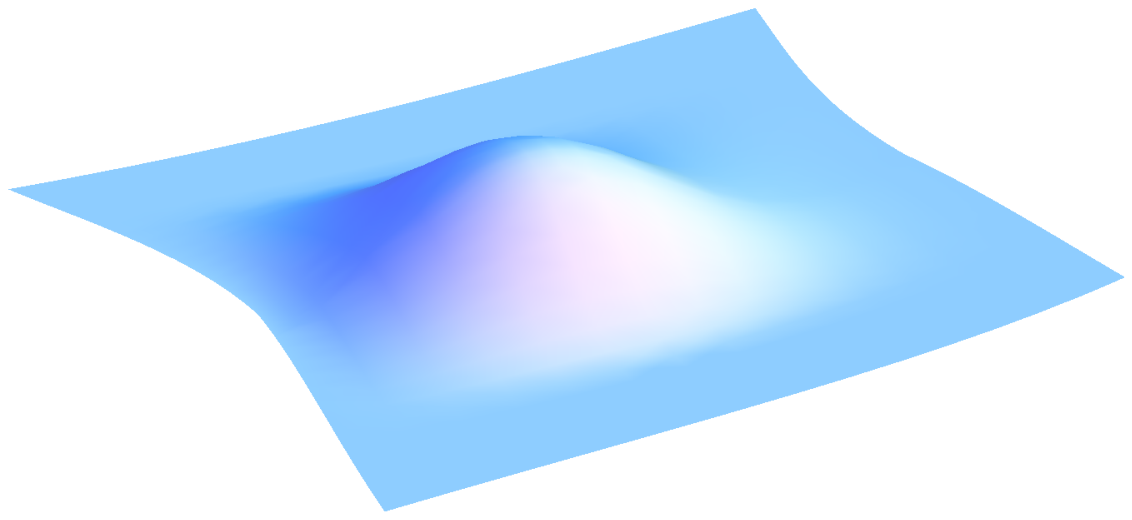}  &
\includegraphics[width=4.1cm,clip]{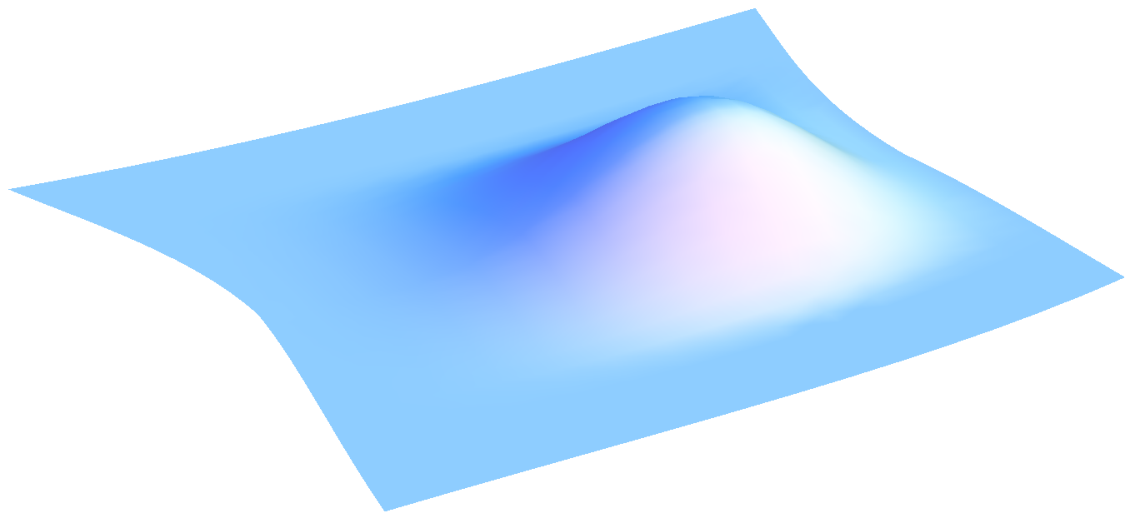} &
\includegraphics[width=4.1cm,clip]{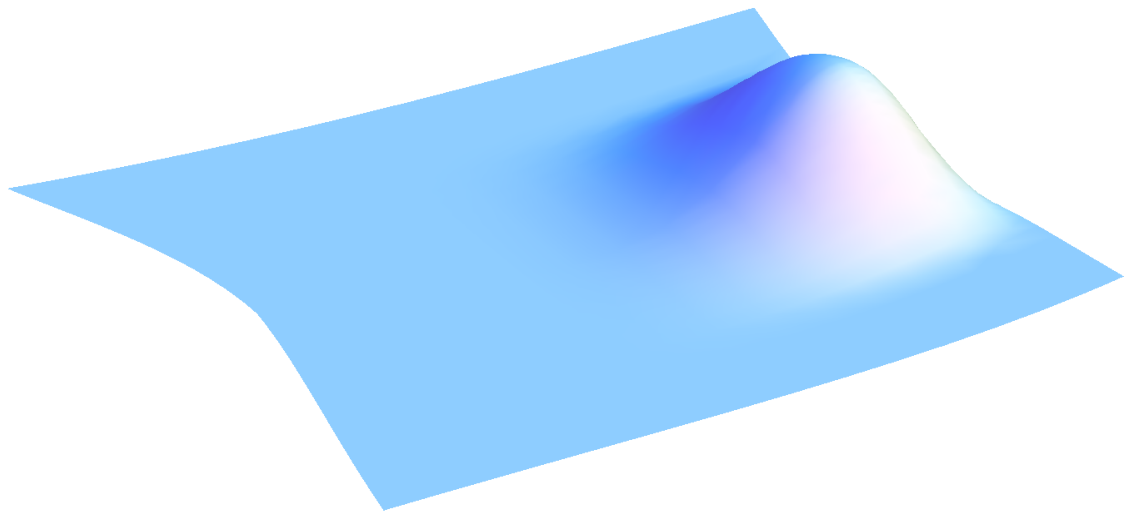} \\
$\phi_{0,2}$ & $\phi_{0,3}$ & $\phi_{0,4}$ \\
\includegraphics[width=4.1cm,clip]{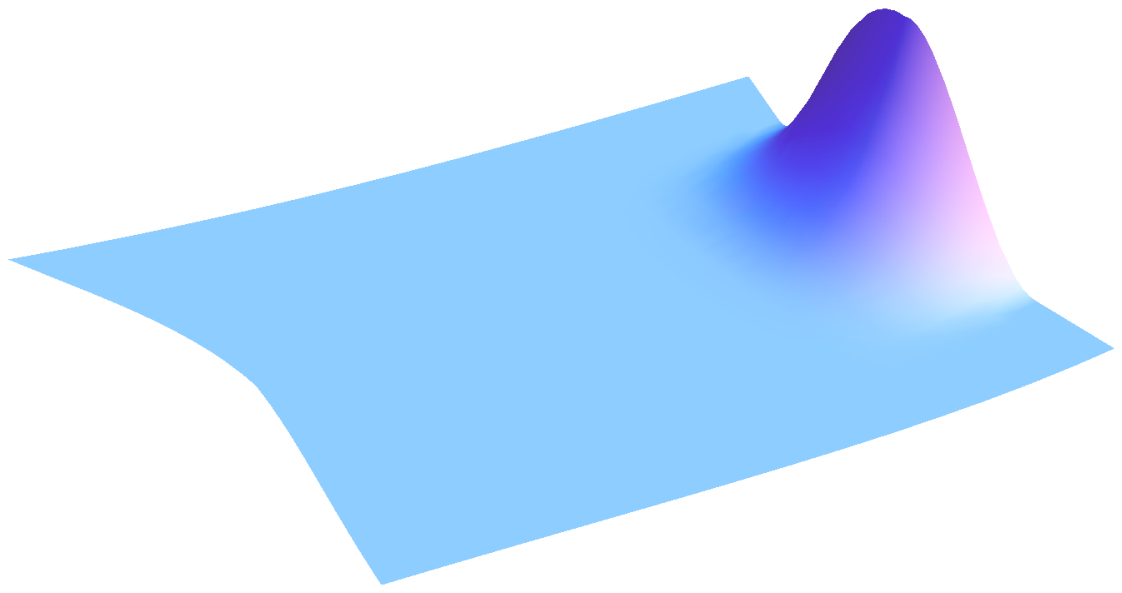}  &
\includegraphics[width=4.1cm,clip]{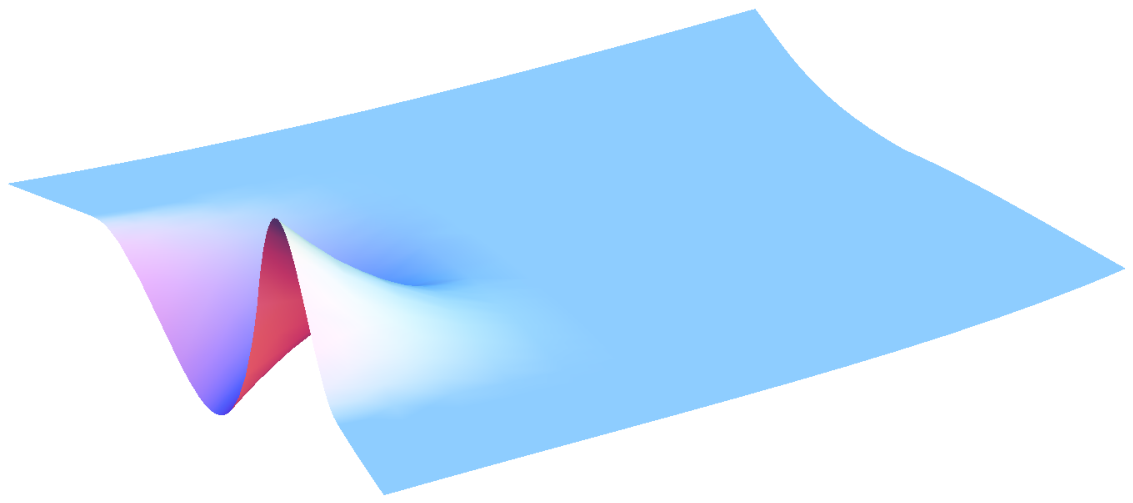} &
\includegraphics[width=4.1cm,clip]{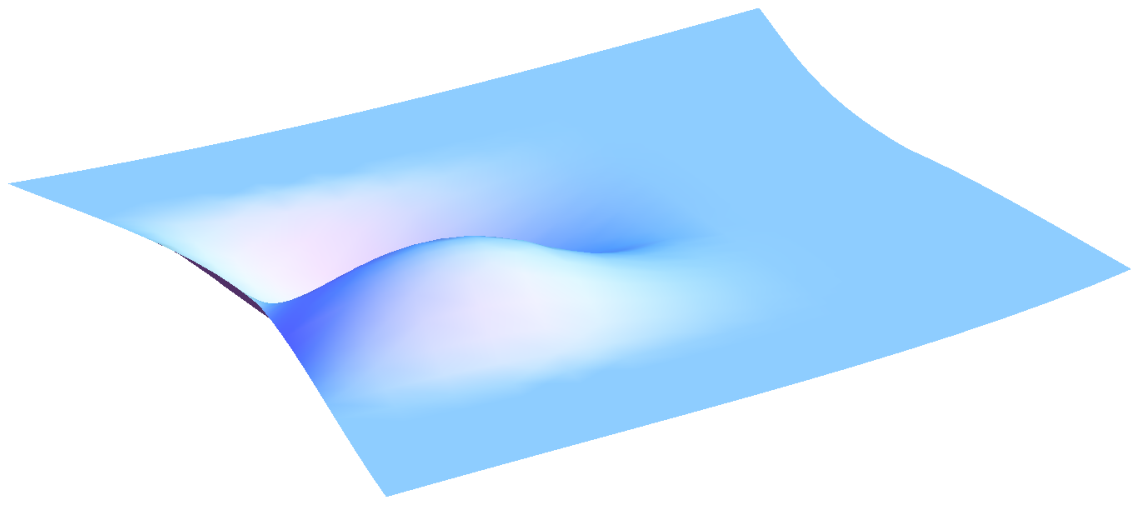} \\
$\phi_{0,5}$ & $\phi_{1,0}$ & $\phi_{1,1}$ \\
\includegraphics[width=4.1cm,clip]{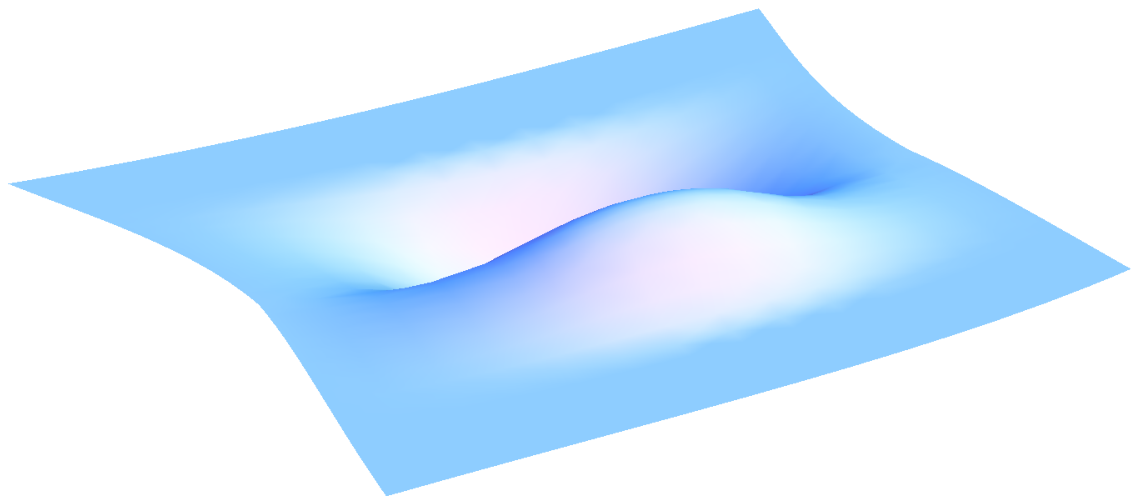}  &
\includegraphics[width=4.1cm,clip]{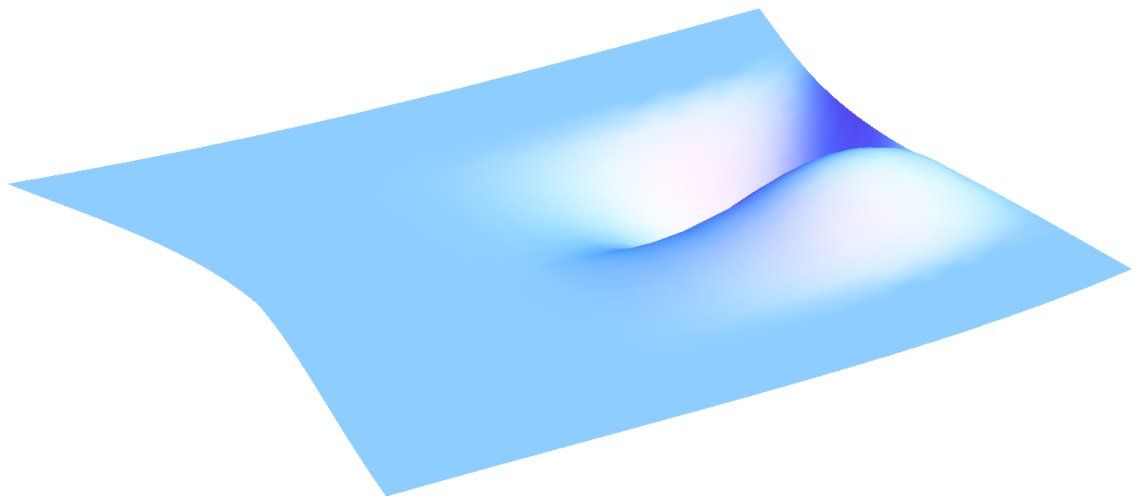} &
\includegraphics[width=4.1cm,clip]{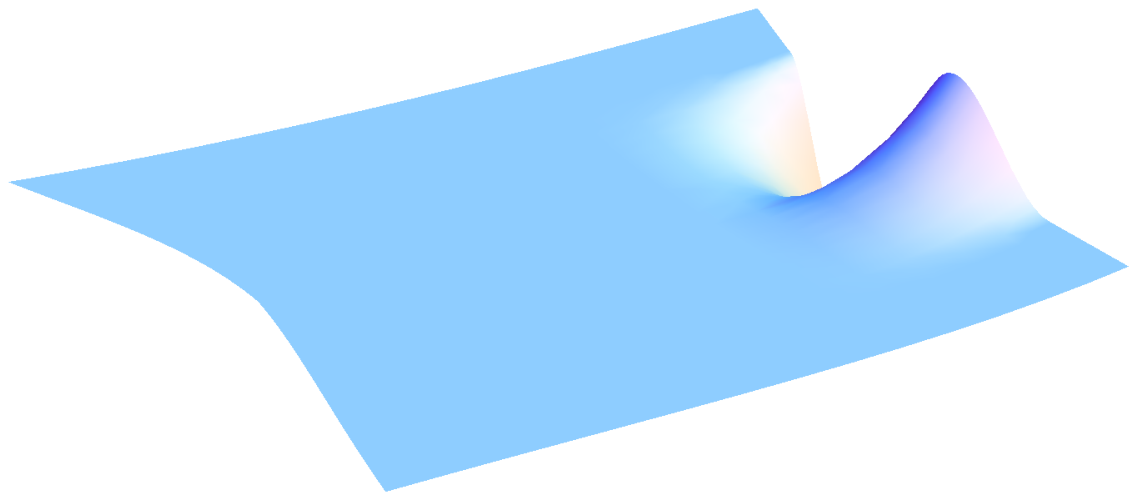} \\
$\phi_{1,2}$ & $\phi_{1,3}$ & $\phi_{1,4}$
\end{tabular}
\caption{The graphs of the basis functions $\phi_{0,i}$ and $\phi_{1,j}$ of the resulting space $\mathcal{V}^{1}_{2}$ for the given bicubic AS $G^{1}$ two-patch geometry,
when both patches $\ab{F}^{(L)}$ and $\ab{F}^{(R)}$ are represented in the space $\mathcal{S}(\mathcal{T}^{3,1}_{k},[0,1]^{2})$ for $k=2$. 
(The graphs are plotted in the parameter range $[0,\frac{1}{2}] \times [0,1]$ for both patches $\ab{F}^{(L)}$ and $\ab{F}^{(R)}$.)}
\label{fig:ex_functions}
\end{figure}

Let $\phi_{i}$, $i=0,\ldots,22 + 26 k + 8 k^2$, be the $C^{1}$-smooth isogeometric basis functions of $\mathcal{V}^{1}$, which are collected as follows:
\[
 \phi_{i}= \left\{
\begin{array}{cl}
 \phi^{(L)}_{\lfloor i/(4+2k) \rfloor+2, i \bmod (4+2k)} & \mbox{ if }0 \leq i \leq 7 + 12 k + 4 k^2 \\
 \phi^{(R)}_{\lfloor (i-8 - 12 k - 4 k^2)/(4+2k) \rfloor+2, i \bmod (4+2k)}  & \mbox{ if } 8 + 12 k + 4 k^2 \leq i \leq 15 + 24 k + 8 k^2\\
 \phi_{0,i-16 -24 k - 8 k^2} & \mbox{ if }16 + 24 k + 8 k^2 \leq i \leq 19 + 25 k + 8 k^2,\\
 \phi_{1,i-20 - 25 k - 8 k^2} & \mbox{ if }20 + 25 k + 8 k^2 \leq i \leq 22+26k + 8 k^{2}.
\end{array}
 \right.
\] 
In addition, we denote by $g^{(S)}_{i}$, $S \in \{L,R \}$, the associated spline functions $\phi_{i} \circ \ab{F}^{(S)}$. Let us consider the mass matrix 
$M=(m_{i,j})_{i,j \in \{0,\ldots, 22 + 26 k + 8 k^2 \}}$ with the entries
\[
 m_{i,j} = \int_{\Omega} \phi_{i} (\ab{x}) \phi_{j}(\ab{x}) \mathrm{d}\ab{x} = 
 \sum_{S \in \{L,R \}} \int_{0}^{1} \int_{0}^{1} g^{(S)}_{i}(u,v) g^{(S)}_{j}(u,v) |\det (J^{(S)}(u,v))| \mathrm{d}u \mathrm{d}v,
\]
where $J^{(S)}$, $S \in \{L,R \}$, is the Jacobian of $\ab{F}^{(S)}$. We also compute (for comparison) the mass matrix for the standard $C^{0}$-smooth 
isogeometric basis functions of the space $\mathcal{V}^{0}=\mathcal{V} \cap C^{0}(\Omega)$. 
Table~\ref{tab:cond} reports for different $k$ the condition numbers $\kappa$ of the diagonally scaled mass matrices $M$ (cf. \cite{Br95}) for the two different bases. 
The results indicate that the basis functions $\phi_i$ are as well conditioned as the standard $C^{0}$-smooth isogeometric basis functions.

\begin{table}
  \centering\advance\tabcolsep by -3pt 
  \begin{tabular}{|c||c|c|c|c|c|c|c|c|c|c|c|c|} \hline
 $k$  & 0    & 1 & 2    & 3    & 4 & 5    & 10    & 15 & 20    & 30    & 40  \\ \hline
 $\mathcal{V}^{0}$ & 938.91   & 724.57 & 654.29 & 624.71  & 608.64 & 598.63 & 578.41  & 572.41 & 569.93 & 567.89  & 567.31 \\ \hline
 $\mathcal{V}^{1}$ & 273.49   & 425.71 & 520.62 & 552.6   & 564.4  & 569.07 & 571.02  & 569.51 & 568.51 & 567.58  & 567.28  \\ \hline
  \end{tabular}
  \caption{For different $k$, the condition numbers $\kappa$ of the diagonally scaled mass matrices $M$ for the standard $C^{0}$-smooth 
isogeometric basis functions ($\mathcal{V}^{0}$) and our $C^{1}$-smooth isogeometric basis functions ($\mathcal{V}^{1}$), cf. Example~\ref{ex:functions}.}
  \label{tab:cond}
 \end{table}
\end{ex}

\section{Spline coefficients of the basis functions of the space $\mathcal{V}^{1}_{2}$} \label{sec:matrices}

We represent the spline functions $g^{(S)} = \phi \circ \ab{F}^{(S)}$, $S \in \{L,R \}$, for the previous constructed isogeometric basis functions 
$\phi$ of the space $\mathcal{V}^{1}_{2}$ as a linear combination of the tensor-product B-splines $N_{i,j}^{p,r}$. Thereby, these linear factors (i.e. the B-spline 
coefficients of the spline functions $g^{(S)}$ with respect to the space $\mathcal{S}(\mathcal{T}^{p,r}_{k},[0,1]^{2})$) will be described by means of blossoming. 
For the sake of simplicity (especially with respect to notation), we
will restrict ourselves below to the case ${\beta}=0$ or
$z_{\beta}=0$.  Note that our framework could be also extended to the
remaining cases. 

\subsection{Concept of blossoming}\label{subsec:blossoming}

We give a short overview of the concept of blossoming. For more detail we refer to e.g.~\cite{ChRiCo09,GO03, Li97, Ra89, Se93}. Given a 
univariate spline function $h \in \mathcal{S}(\mathcal{T}^{p,r}_{k})$ with the spline representation~\eqref{eq:spline_representation}, there exists a 
uniquely defined function $H: \R^{p} \rightarrow \R$, called the blossom of $h$, possessing the following properties:
\begin{itemize}
 \item $H$ is symmetric,
 \item $H$ is multi-affine, and
 \item $H(t,\ldots,t)=h(t)$.
\end{itemize}
These properties imply (by the so called dual function property, see \cite{GO03})  
\[
 H(t^{p,r}_{i+1},\ldots,t^{p,r}_{i+p})=d_{i},\mbox{ } i=0, \ldots , p+k(p-r),
\]
and therefore fully determine the blossom, since the value $H(s_{1},\ldots,s_{p})$ for arbitrary values $s_{1},\ldots,s_{p}$ can be 
computed by recursively using the convex combinations
\begin{align}
 & H(t^{p,r}_{i+1},\ldots,t^{p,r}_{i+p-m},s_{1},\ldots,s_{m}) = \nonumber \\
 & (1-\gamma_{i}^{m}(s_{m}))H(t^{p,r}_{i},\ldots,t^{p,r}_{i+p-m},s_{1},\ldots,s_{m-1}) + 
 \gamma_{i}^{m}(s_{m})H(t^{p,r}_{i+1},\ldots,t^{p,r}_{i+p-m+1},s_{1},\ldots,s_{m-1}), \nonumber
\end{align}
for $i=m, \ldots, p+k{p-r}$, $m=1,\ldots, p$ and 
\[
 \gamma_{i}^{m}(s) = \frac{s - t^{p,r}_{i}}{t^{p,r}_{i+p-m+1} - t^{p,r}_{i}} .
\]
The concept of blossoming provides a simple way to perform knot insertion, to differentiate a spline function and to multiply two spline functions. Given the 
spline function $h \in \mathcal{S}(\mathcal{T}^{p,r}_{k},[0,1])$ with the blossom~$H$. Representing $h$ as a spline function in the space 
$\mathcal{S}(\mathcal{T}^{p,r-1}_{k},[0,1])$, the corresponding spline control points $\bar{d}_{i}$ are given by
\[
 \bar{d}_{i} = H(t^{p,r-1}_{i+1},\ldots,t^{p,r-1}_{i+p}), \quad i=0, \ldots , p+k(p-r+1) .
\]
The spline control points $\tilde{d}_{i}$ of the derivative of $h$, i.e. $h' \in 
\mathcal{S}(\mathcal{T}^{p-1,r-1}_{k},[0,1])$, can be computed as follows:
\[
 \tilde{d}_{i} = \frac{p}{t_{i+p+1}^{p,r}-t_{i+1}^{p,r}} ( H(t_{i+1}^{p-1,r-1},\ldots, t_{i+p-1}^{p-1,r-1},t_{i+p+1}^{p,r}) - 
 H(t_{i+1}^{p-1,r-1},\ldots, t_{i+p-1}^{p-1,r-1},t_{i+1}^{p,r}))
\]
for $i=0, \ldots , p-1+k(p-r)$.
Given further the spline function $h_{1} \in \mathcal{S}(\mathcal{T}^{p_1,r_1}_{k},[0,1])$ with the blossom $H_{1}$.  Let $\hat{p}=p+p_{1}$ and $\hat{r}=\min (r,r_{1})$. 
Then the spline control points $\hat{d}_{i}$ of the product $\hat{h}=h h_{1} \in \mathcal{S}(\mathcal{T}^{\hat{p},\hat{r}}_{k},[0,1])$ are given by
\[
 \hat{d}_{i} = \frac{1}{ {\hat{p} \choose p} } \sum  H(t_{i_{1}}^{\hat{p},\hat{r}},\ldots,t_{i_{p}}^{\hat{p},\hat{r}}) H_{1}(t_{i_{p+1}}^{\hat{p},\hat{r}}, 
 \ldots , t_{i_{\hat{p}}}^{\hat{p},\hat{r}}) , \quad i=0, \ldots , \hat{p}+k(\hat{p}-\hat{r}),  
\]
where the summation runs over all possibilities to split the set $\{i+1, \ldots , i+\hat{p} \}$ into the two disjoint subsets $\{i_{1},\ldots, i_{p} \}$ 
and $\{ i_{p+1} , \ldots , i_{\hat{p}} \}$.

\subsection{Spline coefficients as matrix entries}

Let $n=p+1+k(p-r)$, $\tilde{n}=p+1+k(p-r-1)+ z_{\beta}$ and $\bar{n}=p+1-d_{\alpha}+k(p+1-d_{\alpha}-r)$. We denote by $g_{0,i}^{(S)}$,  $i=0,\ldots, \tilde{n}-1$, 
and $g_{1,j}^{(S)}$, $j=0,\ldots, \bar{n}-1$, for $S \in \{L,R \}$ the spline functions $\phi_{0,i} \circ \ab{F}^{(S)}$, and $\phi_{1,j} \circ \ab{F}^{(S)}$, 
respectively. In addition, let $\ab{B}^{(S)}= (\ab{B}^{(S)}_0, \ab{B}^{(S)}_1)^T$ be the vector of functions, where 
\[
\ab{B}^{(S)}_0(u,v) = (g_{0,0}^{(S)}(u,v),\ldots,g_{0,\tilde{n}-1}^{(S)}(u,v))^T ,
\]
as well as 
\[
\ab{B}^{(S)}_1(u,v) = (g_{1,0}^{(S)}(u,v),\ldots,g_{1,\bar{n}-1}^{(S)}(u,v))^T ,
\]
and let $\ab{B}^{*}= (\ab{B}^{*}_0, \ab{B}^{*}_1)^T$ be the vector of functions, where 
\[
\ab{B}^{*}_0(u,v)=(N_{0,0}^{p,r}(u,v),\ldots,N_{0,n-1}^{p,r}(u,v))^T,
\]
as well as 
\[
\ab{B}^{*}_1(u,v)=(N_{1,0}^{p,r}(u,v),\ldots,N_{1,n-1}^{p,r}(u,v))^T.
\]
Then there exists a matrix $A^{(S)} \in \R^{(\tilde{n}+\bar{n}) \times 2n }$ 
such that
\[
\ab{B}^{(S)}(u,v) = A^{(S)} \ab{B}^{*}(u,v). 
\]
The matrix $A^{(S)}$ for $S \in \{L,R \}$ has a block structure, resulting in an equation of the form 
\begin{equation} \label{eq:block_matrix}
\left( \begin{array}{c}
\ab{B}^{(S)}_0 \\
\ab{B}^{(S)}_1
\end{array}
\right) =
\left( \begin{array}{cc}
A_{1} & A_{2}^{(S)} \\
0 & A_{3}^{(S)}
\end{array}
\right) \left( \begin{array}{c}
\ab{B}^{*}_0 \\
\ab{B}^{*}_1
\end{array}
\right) ,
\end{equation}
where $A_{1} \in \R^{\tilde{n} \times n} $, $A_{2}^{(S)} \in  \R^{\tilde{n} \times n} $ and 
$A_{3}^{(S)} \in \R^{\bar{n} \times n} $. For each row the single entries of the matrix $A^{(S)}$, $S \in \{L,R \}$, provide the B-spline coefficients of the 
corresponding spline function $g_{0,i}^{(S)}$ or $g_{1,j}^{(S)}$ with respect to the spline space $\mathcal{S}(\mathcal{T}^{p,r}_{k},[0,1]^{2})$.

The following lemma provides the entries of the single matrices $A_{1}$, $A_{2}^{(S)}$ and $A_{3}^{(S)}$:

\begin{lem} \label{lem:matrices}
 We denote  by $H_{i}^{p,r}$ the blossom of the B-spline $N_{i}^{p,r}$. For a linear function $w: [0,1] \rightarrow \R$ with the 
 B\'ezier representation
\begin{equation} \label{eq:linearBezier}
w(t) = w_{0}(1-t) + w_{1} t , \quad t \in [0,1], \quad w_{0},w_{1} \in \R,
\end{equation}
we define the matrix $\hat{A}{(w)} = (\hat{a}^{(w)}_{i,j})_{i,j}$ given by
\[
\hat{a}^{(w)}_{i,j}= \frac{1}{p} \sum_{l=1}^{p} ( (1-t_{l}^{p,r})w_{0} + t_{l}^{p,r} w_{1} ) H_{i}^{p-1,r}(t_{j+1}^{p,r},\ldots,t_{l-1}^{p,r},t_{l+1}^{p,r},
\ldots,t_{j+p}^{p,r}).
\] 
The matrices $A_{1}$, $A_{2}^{(S)}$ and $A_{3}^{(S)}$, compare~\eqref{eq:block_matrix}, are given as follows (depending on 
$\alpha^{(S)}$, $\beta^{(S)}$ or $\beta$):
 \begin{itemize}
  \item Let $\beta= 0$. Then we have
  \[
   A_{1}=A^{(S)}_{2} = I_n , \mbox{ and }A^{(S)}_{3}= \left\{\begin{array}{cl} 
                                        \frac{1}{(k+1)p} \alpha^{(S)} I_n & \mbox{ if } d_{\alpha}=0, \\
                                        \frac{1}{(k+1)p} \hat{A}{(\alpha^{(S)})} & \mbox{ if }d_{\alpha}=1.
                                        \end{array} \right.
  \]
  \item Let $z_{\beta}=0$. Then we have 
  \[
    A_{1}=\bar{A}, \mbox{ } A^{(S)}_{2} = \bar{A} +\frac{1}{(k+1)p} \tilde{A} \hat{A}{(\beta^{(S)})} , 
    \mbox{ and }A^{(S)}_{3}= \left\{\begin{array}{cl} 
                                        \frac{1}{(k+1)p} \alpha^{(S)} I_n & \mbox{ if } d_{\alpha}=0, \\
                                        \frac{1}{(k+1)p} \hat{A}{(\alpha^{(S)})} & \mbox{ if }d_{\alpha}=1,
                                        \end{array} \right.
  \]
where the entries of the matrices $\bar{A} = (\bar{a}_{i,j})_{i,j}$ and $\tilde{A} = (\tilde{a}_{i,j})_{i,j}$ are given by
\[
\bar{a}_{i,j}=H_{i}^{p,r+1}(t_{j+1}^{p,r},\ldots,t_{j+p}^{p,r})
\]
and
\[
\tilde{a}_{i,j}= \frac{p}{t_{j+p+1}^{p,r+1}-t_{j+1}^{p,r+1}} ( H_{i}^{p,r+1}(t_{j+1}^{p-1,r},\ldots, t_{j+p-1}^{p-1,r},t_{j+p+1}^{p,r+1}) - 
 H_{i}^{p,r+1}(t_{j+1}^{p-1,r},\ldots, t_{j+p-1}^{p-1,r},t_{j+1}^{p,r+1})),
\]
respectively.
 \end{itemize}
Here, $I_n$ is the identity matrix of dimension $n$.
\end{lem}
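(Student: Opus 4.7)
The plan is to read off the three blocks of $A^{(S)}$ by separating the $u$- and $v$-dependence in the explicit representations of $\phi_{0,i}$ and $\phi_{1,j}$ from Section~\ref{subsec:basis_second}, and then to convert each $v$-factor into its expansion in the B-spline basis of $\mathcal{S}(\mathcal{T}^{p,r}_{k},[0,1])$ by applying, in sequence, the three blossoming operations recalled in Section~\ref{subsec:blossoming}: knot insertion, differentiation, and multiplication by a lower-degree spline. The block structure in~\eqref{eq:block_matrix}, including the zero lower-left block, is immediate from the representations of $\phi_{0,i}$ and $\phi_{1,j}$: only $N_{0}^{p,r}(u)$ and $N_{1}^{p,r}(u)$ appear in $u$, and $\phi_{1,j}$ contains only $N_{1}^{p,r}(u)$.

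For the case $\beta=0$, the $v$-factors in front of $N_{0}^{p,r}(u)$ and $N_{1}^{p,r}(u)$ in $\phi_{0,i}$ are both the B-spline $N_{i}^{p,r}(v)$, so $A_{1}=A_{2}^{(S)}=I_{n}$ by inspection. For $A_{3}^{(S)}$ the $v$-factor of $\phi_{1,j}$ is $\frac{1}{(k+1)p}\alpha^{(S)}(v)\bar{N}_{j}(v)$. If $d_{\alpha}=0$ then $\alpha^{(S)}$ is constant and $\bar{N}_{j}=N_{j}^{p,r}$, giving the stated scalar multiple of the identity. If $d_{\alpha}=1$ then $\bar{N}_{j}=N_{j}^{p-1,r}$ has blossom $H_{j}^{p-1,r}$, and the blossoming product formula with $\hat{p}=p$, $p_{1}=1$ simplifies (using $\binom{p}{p-1}=p$ and the fact that the blossom of a linear $w$ with B\'ezier form~\eqref{eq:linearBezier} is the affine map $s\mapsto (1-s)w_{0}+sw_{1}$) to exactly the entries $\hat{a}^{(\alpha^{(S)})}_{j,i}$ of $\hat{A}(\alpha^{(S)})$.

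For the case $z_{\beta}=0$ we have $\tilde{N}_{i}=N_{i}^{p,r+1}$; expressing it in the coarser-regularity space $\mathcal{S}(\mathcal{T}^{p,r}_{k})$ is a knot-insertion step, so by the dual function property its coefficients are the entries $\bar{a}_{i,j}$ of $\bar{A}$, yielding $A_{1}=\bar{A}$. The coefficient of $N_{1}^{p,r}(u)$ in $\phi_{0,i}$ is $\tilde{N}_{i}(v)+\tfrac{1}{(k+1)p}\beta^{(S)}(v)\tilde{N}^{\prime}_{i}(v)$; the first summand contributes $\bar{A}$, while the second is handled by first applying the differentiation formula to write $\tilde{N}^{\prime}_{i}\in\mathcal{S}(\mathcal{T}^{p-1,r}_{k})$ in its B-spline basis with coefficients $\tilde{a}_{i,j}$, and then multiplying by the linear $\beta^{(S)}$ via the product formula, which transports the result into $\mathcal{S}(\mathcal{T}^{p,r}_{k})$ with coefficient matrix $\tilde{A}\,\hat{A}(\beta^{(S)})$ (composition in this order, since the inner sum first selects a $\mathcal{T}^{p-1,r}_{k}$ B-spline and the outer one re-expands it). Summing and scaling by $1/((k+1)p)$ gives the stated $A_{2}^{(S)}$; the matrix $A_{3}^{(S)}$ is computed exactly as in the $\beta=0$ case.

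The main obstacle is the bookkeeping in verifying the $\hat{a}^{(w)}_{i,j}$ formula from the general product formula of Section~\ref{subsec:blossoming}: one must correctly single out the unique index assigned to $w$ in each splitting of $\{j+1,\ldots,j+p\}$, pass between the knot vectors $\mathcal{T}^{p,r}_{k}$ and $\mathcal{T}^{p-1,r}_{k}$ without confusing their entries, and keep the matrix product $\tilde{A}\,\hat{A}(\beta^{(S)})$ in the correct order. Apart from this indexing, every step of the argument is a direct application of a formula already stated in Section~\ref{subsec:blossoming}.
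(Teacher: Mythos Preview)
Your proposal is correct and follows exactly the route the paper intends: the paper's own proof is the single sentence ``The results follow directly from the concept of blossoming as presented in Subsection~\ref{subsec:blossoming},'' and what you have written is precisely the unpacking of that sentence---reading off the $u$- and $v$-factors from the explicit formulas for $\phi_{0,i}$ and $\phi_{1,j}$, then applying the knot-insertion, differentiation, and product blossoming identities in turn. There is no methodological difference to discuss.
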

\begin{proof}
The results follow directly from the concept of blossoming as presented in Subsection \ref{subsec:blossoming}.
\end{proof}

\begin{rem}
 The matrices $A_{1}$, $A_{2}^{(S)}$ and $A_{3}^{(S)}$, see \eqref{eq:block_matrix}, are sparse matrices (compare Example~\ref{ex:matrixp3a1}).
\end{rem}

\begin{rem}
 A further possibility to construct the matrices $A_{1}$, $A_{2}^{(S)}$ and $A_{3}^{(S)}$, see~\eqref{eq:block_matrix}, is the use of the concept of fitting. Thereby, 
 the $m$-th row of the matrices 
 $A_{1}$, $A_{2}^{(S)}$ and $A_{3}^{(S)}$, denoted by $(a_{1,m,0}, \ldots, a_{1,m,n-1})$, $(a^{(S)}_{2,m,0}, \ldots, 
 a^{(S)}_{2,m,n-1})$ and $(a^{(S)}_{3,m,0}, \ldots, a^{(S)}_{3,m,n-1})$, respectively, are computed by minimizing the terms
 \[
  \sum_{i=0}^{n-1} (g^{(L)}_{0,m}(0,\xi_{i}) - \sum_{j}^{n-1} a_{1,m,j} N_{j}^{p,r}(\xi_{i}) )^{2},
 \]
 \[
  \sum_{i=0}^{n-1} (  \frac{D_u g^{(S)}_{0,m}(0,\xi_{i})}{p} + g^{(S)}_{0,m}(0,\xi_{i}) - \sum_{j}^{n-1} a^{(S)}_{2,m,j} N_{j}^{p,r}(\xi_{i}) )^{2}
 \]
 and
 \[
  \sum_{i=0}^{n-1} (  \frac{D_u g^{(S)}_{1,m}(0,\xi_{i})}{p} + g^{(S)}_{1,m}(0,\xi_{i}) - \sum_{j}^{n-1} a^{(S)}_{3,m,j} N_{j}^{p,r}(\xi_{i}) )^{2},
 \]
 respectively, where $(\xi_{i})_{i=0,\ldots,n-1}$ are the Greville abscissa of the B-splines $N_{j}^{p,r}$, $j=0,\ldots,n-1$, of the spline space 
 $\mathcal{S}(\mathcal{T}_{k}^{p,r},[0,1])$.
\end{rem}

\begin{ex} \label{ex:matrixp3a1}
 Let ${\beta}\neq 0$ and $z_{\beta}=0$, $p=3$, $r=1$, $k \geq 2$ and $\tau_{i}=\frac{i}{k+1}$ for $i=1,\ldots,k$. Let $w: [0,1] \rightarrow \R$ be a linear function with 
 the B\'ezier representation~\eqref{eq:linearBezier}. Then the rows $\bar{a}_{i}$, $\tilde{a}_{i}$ and $\hat{a}_{i}^{(w)}$ of the matrices $\bar{A}$, 
 $\tilde{A}$ and $\hat{A}(w)$ from Lemma~\ref{lem:matrices} are given as follows:
 \begin{scriptsize}
\[
\bar{a}_{i}=\begin{cases}
(1,\underbrace{0,\ldots,0}_{2k+3}) \quad i=0\\
(0,1,\frac{1}{2},\underbrace{0,\ldots,0}_{2k+1}) \quad i=1 \\
(0,0,\frac{1}{2},\frac{2}{3},\frac{1}{3},\underbrace{0,\ldots,0}_{2k-1}) \quad i=2 \\
(\underbrace{0,\ldots,0}_{2i-3},\frac{1}{3},\frac{2}{3},\frac{2}{3},\frac{1}{3},\underbrace{0,\ldots,0}_{2(k-i)+3}) \quad 3 \leq i \leq k \\
(\underbrace{0,\ldots,0}_{2k-1},\frac{1}{3},\frac{2}{3},\frac{1}{2},0,0)\quad i=k+1 \\
(\underbrace{0,\ldots,0}_{2k+1},\frac{1}{2},1,0) \quad i=k+2 \\
(\underbrace{0,\ldots,0}_{2k+3},1) \quad i=k+3
\end{cases} \mbox{, }
\tilde{a}_{i}=\begin{cases}
(-3(k+1),\underbrace{0,\ldots,0}_{k+2}) \quad i=0 \\
(3(k+1),-\frac{3(k+1)}{2},\underbrace{0,\ldots,0}_{k+1}) \quad i=1 \\
(0,\frac{3(k+1)}{2},-(k+1),\underbrace{0,\ldots,0}_{k}) \quad i=2 \\
(\underbrace{0,\ldots,0}_{i-1},k+1,-(k+1),\underbrace{0,\ldots,0}_{k-i+2}) \quad 3 \leq i \leq k  \\
(\underbrace{0,\ldots,0}_{k},k+1,-\frac{3(k+1)}{2},0) \quad i=k+1 \\
(\underbrace{0,\ldots,0}_{k+1},\frac{3(k+1)}{2},-3(k+1))  \quad i=k+2 \\
(\underbrace{0,\ldots,0}_{k+2},3(k+1)) \quad i=k+3
\end{cases}
\]
\end{scriptsize}
and
\begin{scriptsize}
\[
\hat{a}^{(w)}_{i}= \frac{1}{6(k+1)}* \begin{cases}
(6(k + 1)w_0, 2 k w_0 + 2 w_1,\underbrace{0,\ldots,0}_{2k+2}) \quad i=0\\
(0,4(k + 1) w_0, (1 + 5 k) w_0 + 4 w_1, (k - 1) w_0 + 2 w_1,\underbrace{0,\ldots,0}_{2k}) \quad i=1\\
\left\{ \begin{array}{l}
(\underbrace{0,\ldots,0}_{2(i-1)},(k + 3 - i) w_0 + (i - 2) w_1, (9 + 5(k - i) ) w_0 + (6 + 5 (i - 2)) w_1, \\
 (6 + 5 (k - i)) w_0 + (9 + 5(i - 2) ) w_1, (k - i) w_0 + (i+1) w_1,\underbrace{0,\ldots,0}_{2(k-i+1)}) 
 \end{array} \right\} \quad 2 \leq i \leq k \\
(\underbrace{0,\ldots,0}_{2k},2 w_0 + (k - 1) w_1, 4 w_0 + (1 + 5 k) w_1, 4 (k + 1) w_1,0) \quad i=k+1 \\
(\underbrace{0,\ldots,0}_{2k+2},2 w_0 + 2 k w_1, 6 (k + 1) w_1)\quad i=k+2
\end{cases}
\]
\end{scriptsize}
\end{ex}

\section{Conclusion} \label{sec:conclusion}

We have studied the spaces of $C^{1}$-smooth isogeometric functions over a special class of two-patch geometries, so-called analysis-suitable $G^{1}$ (AS $G^{1}$) 
two-patch parameterizations (cf. \cite{CoSaTa16}). This class of two-patch geometries is of particular interest, since exactly these geometries allow under 
certain assumptions $C^{1}$ isogeometric spaces with optimal approximation properties, see \cite{CoSaTa16}. More precisely, 
we have computed the dimension of these $C^{1}$ spaces and have presented an explicit basis construction. The resulting basis functions are well conditioned, have small 
local supports and their spline coefficients can be simply computed by means of blossoming or fitting. 

Note that the constructed basis interpolates traces and transversal derivatives at the interface. Hence, the basis functions may be negative. 
In fact, the functions interpolating the transversal derivative are by construction positive on one side of the interface and negative on the other. 
The presented basis can be transformed easily to obtain locally supported basis functions which sum up to one. However, 
it is unclear whether or not a non-negative, local partition of unity exists for all AS $G^{1}$ parameterizations. 
This will be of interest for future research. 

One issue that remains to be studied is the flexibility of AS $G^1$ geometries over general multi-patch domains. 
The basis construction over two-patch geometries can be applied also to 
multi-patch configurations except for the basis functions around vertices, where modifications might be necessary. 
We are confident that the presented basis representation can be extended to the multi-patch case and used for fitting procedures, that 
approximate any given geometry with an AS $G^1$ parameterization. 

The developed basis provides a simple representation that can be implemented in existing IGA libraries. 
Thus, our $C^{1}$-smooth functions may be used to discretize different fourth-order partial differential equations.
It may be of interest for future studies to perform such simulations and analyze their properties 
as well as to investigate the class of AS $G^{1}$ parameterizations for volumetric two-patch and multi-patch domains. 

\paragraph*{\bf Acknowledgment}
The authors wish to thank the anonymous reviewers for their comments that helped to improve the paper. The first two authors (M. Kapl and G. Sangalli) were partially 
supported by the European Research Council through the FP7 ERC Consolidator Grant n.616563 HIGEOM, and by the Italian MIUR through the PRIN “Metodologie innovative 
nella modellistica differenziale numerica”. This support is gratefully acknowledged.

\end{document}